\newtheorem{thm}{Theorem}
\newtheorem{prop}[thm]{Proposition}
\theoremstyle{definition}
\newtheorem{defin}[thm]{Definition}
\newtheorem{rem}[thm]{Remark}
\newcommand{\Str}{\mathsf{Str}}
\newcommand{\eps}{\varepsilon}
\newcommand{\C}{\mathbb{C}}
\newcommand{\R}{\mathbb{R}}
\newcommand{\N}{\mathbb{N}}
\newcommand{\cT}{\mathcal{T}}
\newcommand{\cL}{\mathcal{L}}
\newcommand{\cH}{\mathcal{H}}
\newcommand{\cD}{\mathcal{D}}
\newcommand{\cE}{\mathcal{E}}
\DeclareMathOperator{\dom}{dom}
\title[]{Spectral asymptotics for two-dimensional Dirac operators in thin waveguides}
\author{William Borrelli$^*$}
\address[W. Borrelli]{Dipartimento di Matematica, Politecnico di Milano, Via Bonardi, 9, I-20133, Milano, Italy.}
\email{william.borrelli@polimi.it}
\urladdr{}
\author{Nour Kerraoui}
\address[N. Kerraoui]{Aix-Marseille Universit\'e, CNRS, Centrale Marseille, I2M, Marseille, France.}
\email{nour-el-houda.kerraoui@etu.univ-amu.fr}
\urladdr{}
\author{Thomas Ourmi\`eres-Bonafos}
\address[T. Ourmi\`eres-Bonafos]{Aix-Marseille Universit\'e, CNRS, Centrale Marseille, I2M, Marseille, France.}
\email{thomas.ourmieres-bonafos@univ-amu.fr}
\urladdr{http://www.i2m.univ-amu.fr/perso/thomas.ourmieres-bonafos/}
\begin{document}
\thanks{$^*$\emph{Corresponding author}. Dipartimento di Matematica, Politecnico di Milano, Via Bonardi, 9, I-20133, Milano, Italy. E-mail: william.borrelli@polimi.it }
\maketitle
\begin{abstract} We consider the two-dimensional Dirac operator with infinite mass boundary conditions posed in a tubular neighborhood of a $C^4$-planar curve. Under generic assumptions on its curvature $\kappa$, we prove that in the thin-width regime the splitting of the eigenvalues is driven by the one dimensional Schrödinger operator on $L^2(\R)$
\[
	\cL_e := -\frac{d^2}{ds^2} - \frac{\kappa^2}{\pi^2}
\]
with a geometrically induced potential. The eigenvalues are shown to be at distance of order $\varepsilon$ from the essential spectrum, where $2\varepsilon$ is the width of the waveguide. This is in contrast with the non-relativistic counterpart of this model, for which they are known to be at a finite distance.
\end{abstract}

\medskip

{\footnotesize
\emph{Keywords}: quantum waveguides, Dirac operator, infinite mass boundary conditions, thin-waveguide limit.

\medskip

\emph{2020 MSC}: 35P05, 81Q10, 81Q15, 81Q37, 82D77.
}
\section{Introduction and main result}
\subsection{Introduction}
In this article we continue the study of spectral properties of relativistic quantum waveguides, initiated in \cite{BBKOB22}. In particular, as explained below, we focus on the existence of discrete eigenvalues in the spectral gap, in the thin-width regime.
\smallskip

The study of non-relativistic quantum waveguides started with the pioneering paper~\cite{ES} 
( see also \cite{DE95,GJ,KKriz} for further improvements), where
it was demonstrated that the quantum free Hamiltonian on a waveguide
given by the Dirichlet Laplacian
possesses discrete eigenvalues when the base curve is not a straight line. 
Roughly speaking, 
the corresponding particle gets trapped in any non-trivially curved quantum waveguide. Notice that this is in sharp contrast with the classical case, considering particles following Newton's law with regular reflection at the boundary. Indeed, except for a set of initial conditions of zero measure in the phase space, particles will eventually leave any bounded region in finite time.
Existence and properties of the geometrically induced 
bound states have attracted a lot of attention in the last decades
and this research field is still very active.
We refer the reader to the monograph~\cite{Exner-Kovarik} for a comprehensive discussion of the subject.
\smallskip

The study of the relativistic counterpart of this Hamiltonian started very recently in the two-dimensional case, in \cite{BBKOB22}, considering the Dirac operator on a tubular neighborhood of a curve with \emph{infinite mass} boundary conditions at the boundary. Generally speaking, the mathematical study of such operator on domains started recently \cite{Arrizabalaga-LeTreust-Raymond17,
Barbaroux-Cornean-LeTreust-Stockmeyer_2019, 
Benguria-Fournais-Stockmeyer-Bosch_2017b,LeTreust-Ourmieres-Bonafos_2018},  motivated by models of hadrons confinement from high-energy physics \cite{CJKTW} or by the description of graphene samples \cite{AB}. We also mention the work \cite{Exner-Holzmann}, where spectral properties of Dirac operators on tubes with \emph{ziz-zag} type boundary conditions are considered.

Notice, however, that boundary conditions for Dirac operators on manifolds with boundary had been already considered previously in the geometry literature, see e.g. \cite{G,HMZ} and references therein.
\smallskip

In \cite[Thm.2]{BBKOB22}, under suitable assumptions, it has been proved that the Dirac operator with infinite mass boundary conditions (see \eqref{eq:operator}), posed in the tubular neighborhood of a planar curve, is self-adjoint and its essential spectrum has been identified. Thus, a natural question is to understand the interplay between the geometry and the relativistic setting. In particular, we focus on the existence of geometrically induced bound states in the thin-waveguide regime. For the Dirichlet Laplacian, it is known that in this regime, up to a renormalization factor, the splitting of the eigenvalues is given by an effective operator and this operator is the one-dimensional Schr\"odinger operator with the attractive  potential given by
\[
-\frac{d^2}{ds^2}-\frac{\kappa^2}{4}\,,
\]
where $\kappa$ is the curvature of the underlying curve $\Gamma$, and $s\in\R$ is the arc-length parameter. For this reason one speaks of geometrically induced bound states, related to the non-trivial geometry of the curve/waveguide (see, e.g. \cite{DE95,Exner-Kovarik} ). 
\smallskip

On the other hand, in \cite[Thm.4]{BBKOB22} it is proved that the Dirac operator with infinite mass boundary conditions (see \eqref{eq:operator}), after a suitable choice of renormalization, converges in norm-resolvent sense to a one-dimensional \emph{free} effective Dirac operator 
\[
-i\sigma_1\partial_s+\frac{2}{\pi}m\sigma_3
\]
whose spectrum is purely absolutely continuous. Here $\sigma_1$ and $\sigma_3$ are the first and third Pauli matrices, respectively (as in \eqref{eq:Pauli}). Then in this case, the effective operator does not bear any geometrical information and geometric effects are expected to appear at the next order in the asymptotic expansion in the thin-waveguide regime. The purpose of this paper is precisely to investigate this problem and in our main result Theorem \ref{thm:main} we provide an asymptotic expansion \eqref{eq:evasymp} for the eigenvalues which provides both the splitting and exhibits an effective operator involving the geometry of the underlying curve. This is achieved using min-max techniques, working on the square of the operator \eqref{eq:operator} and relating its eigenvalues to those of a reference operator, defined using its quadratic form \eqref{eq:effectiveform}.

\subsection{Main result}

Let $\gamma : \R \to \R^2$ be an arc-length parametrization of a $C^4$-planar curve $\Gamma$. For $s\in \R$, we define the normal $\nu(s)$ at the point $\gamma(s) \in \Gamma$ such that $(\gamma'(s),\nu(s))$ is an orthonormal basis of $\R^2$. We define the curvature $\kappa(s)$ of the curve $\Gamma$ at the point $\gamma(s)$ by
\[
	\kappa(s) := \gamma''(s) \cdot \nu(s).
\]
Remark that under the smoothness assumption on $\gamma$, $\kappa \in C^2(\R)$ and all along this paper we assume the following hypothesis:
\begin{enumerate}[label=(\Alph*)]
	\item $\lim_{s\to \pm \infty}\kappa(s) = 0$,
	\item $\kappa',\kappa''\in L^\infty(\R)$.
\end{enumerate}

Define the strip $\Str := \R \times (-1,1)$ and $\eps_0 := \|\kappa\|_{L^\infty(\R)}^{-1}$. For $\eps > 0$, we consider the map
\[
	\Phi_\eps : 	\left\{
					\begin{array}{lcl}
						\Str & \to & \R^2\\
						(s,t) & \mapsto & \gamma(s) + \eps t \nu(s).
					\end{array}
				\right.
\]
and define the tubular neighborhood of $\Gamma$
\[
	\Omega_\eps := \Phi_\eps(\Str).
\]
Thus $s\in\R$ and $t\in(-1,1)$ are the arc-length parameter of the curve and the transverse coordinate with respect to the curve, respectively.

In order to guarantee that $\Phi_\eps$ is a $C^3$-diffeomorphism from $\Str$ to $\Omega_\eps$ we will always assume that
\begin{enumerate}[label=(\Alph*),resume]
	\item the map $\Phi_\eps$ is injective.
\end{enumerate}

We are interested in the spectrum of the Dirac operator with infinite mass boundary conditions posed in the domain $\Omega_\eps$. Let $\cD_\Gamma(\eps)$ denote this operator, it writes
\begin{multline}\label{eq:operator}
	\cD_\Gamma(\eps) := - i \sigma_1 \partial_1 - i\sigma_2 \partial_2 + m \sigma_3\\\dom{\cD_\Gamma(\eps)} := \{ u  \in H^1(\Omega_\eps,\C^2):-i \sigma_3 \sigma \cdot \nu_\eps = u \text{ on }\partial\Omega_\eps \}\,,
\end{multline}
where $\nu_\eps$ is the outward pointing normal vector field on $\partial\Omega_\eps$ and $m\geq 0$ is a fixed parameter. Here $\sigma= (\sigma_1,\sigma_2)$ and $\sigma_1, \sigma_2, \sigma_3$ are the Pauli matrices
\begin{equation}\label{eq:Pauli}
	\sigma_1 := \begin{pmatrix} 0 & 1 \\ 1 & 0\end{pmatrix},\quad \sigma_2 := \begin{pmatrix} 0 & -i \\ i & 0\end{pmatrix},\quad \sigma_3 := \begin{pmatrix}1 &0\\0 &- 1\end{pmatrix}.
\end{equation}
Thanks to \cite[Thm. 2]{BBKOB22}, we know that for $\eps$ small enough, $\cD_\Gamma(\eps)$ is self-adjoint, that its spectrum is symmetric with respect to $0$ and that is essential spectrum is given by
\[
	\operatorname{Sp}_{ess}(\cD_\Gamma(\eps)) = \Big(-\infty, -\frac{E_1(m\eps)}{\eps}\Big]\cup \Big[\frac{E_1(m\eps)}{\eps},+\infty\Big)
\]
where for $\rho \geq0$, $E_1(\rho) := \sqrt{\rho^2 + k_1(\rho)^2}$ and where $k_1(\rho)$ is defined as the unique root of 
\[
	\rho \sin(2k) + k \cos(2k) = 0
\]
lying in $[\frac\pi4,\frac\pi2)$.
\begin{rem}
Notice that there is a slight change in notation, compared to \cite[Thm. 2]{BBKOB22}. Indeed, there $k_1(\cdot)$ is denoted by $E_1(\cdot)$ and then the thresholds of the essential spectrum are $\pm\sqrt{\varepsilon^{-2}E_1(m\varepsilon)+m^2}$.
\end{rem}

Our aim is to investigate the possible existence of discrete spectrum of $\cD_\Gamma(\eps)$ in the thin waveguide regime $\eps \to 0$. To do so, we use the min-max principle for the operator $\cD_\Gamma(\eps)^2$, recalled below.
\begin{defin} Let $Q$ be a closed, lower semi-bounded and densely defined quadratic form with domain $\dom{Q}$ in a Hilbert space $\cH$. For $n \in \N$, the $n$-th min-max value of $Q$ is defined as
\begin{equation}
	\mu_n(Q) := \inf_{\substack{F \subset \dom{Q}\\\dim F = n}} \sup_{u \in F \setminus\{0\}} \frac{Q[u]}{\|u\|_{\cH}^2}.
	\label{eqn:min-maxdef}
\end{equation}
If $A$ is the unique self-adjoint operator associated with the sesquilinear form derived from $Q$ {\it via} Kato's first representation theorem (see \cite[Ch. VI, Thm. 2.1]{Kat}) we shall refer to \eqref{eqn:min-maxdef} as the $n$-th min-max level of $A$ and we note $\mu_n(A) = \mu_n(Q)$.
\end{defin}

Now, we can recall the min-max principle.
\begin{prop}
	Let $Q$ be a closed, lower semi-bounded and densely defined quadratic form with domain $\dom{Q}$ in a Hilbert space $\cH$. Let $A$ be the unique self-adjoint operator associated with $Q$. Then, for $n \in \N$, the following alternative holds true:
	\begin{enumerate}
		\item either $\mu_n(A) < \inf \operatorname{Sp}_{ess}(A)$ and  $\mu_n(A)$ is the $n$-th eigenvalue of $A$ (counted with multiplicities),
		\item or $\mu_n(A) = \inf \operatorname{Sp}_{ess}(A)$ and for all $k \geq n$ there holds $\mu_k(A) = \inf \operatorname{Sp}_{ess}(A)$.
	\end{enumerate}
	\label{prop:min-max}
\end{prop}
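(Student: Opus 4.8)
The statement is the classical min--max (Courant--Fischer--Glazman) principle, and the natural route is through the spectral theorem. By Kato's first representation theorem $Q$ is the form of a unique lower semi-bounded self-adjoint operator $A$ with $\dom Q = \dom|A|^{1/2}$; writing $E_A$ for the spectral measure of $A$ and $\mu_u := \langle E_A(\cdot)u,u\rangle_\cH$ for the associated scalar measures, one has $Q[u] = \int_\R \lambda\, d\mu_u(\lambda)$ for every $u \in \dom Q$, the integral being well defined and finite exactly on $\dom Q$ because $Q$ is lower semi-bounded. Set $\Sigma := \inf \operatorname{Sp}_{ess}(A) \in (-\infty,+\infty]$ and recall that $\operatorname{Sp}(A)\cap(-\infty,\Sigma)$ consists of isolated eigenvalues of finite multiplicity; list them with multiplicity in nondecreasing order as $\lambda_1 \le \lambda_2 \le \cdots$, a family of cardinality $N \in \{0,1,2,\dots\}\cup\{\infty\}$, and set $\lambda_n := +\infty$ for $n > N$. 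The plan is to reduce everything to the single identity
\[
	\mu_n(A) = \min\{\lambda_n,\Sigma\}\qquad\text{for every } n\in\N,
\]
from which the dichotomy follows at once: $n \mapsto \mu_n(A)$ is nondecreasing, so if $\mu_n(A) < \Sigma$ then $\mu_n(A) = \lambda_n$ is genuinely the $n$-th eigenvalue counted with multiplicity (case~(1)); if $\mu_n(A) = \Sigma$ then for every $k \ge n$ one has $\Sigma = \mu_n(A) \le \mu_k(A) \le \Sigma$ (case~(2)); and $\mu_n(A) > \Sigma$ is excluded by the upper bound below.

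\emph{Upper bound} $\mu_n(A) \le \min\{\lambda_n,\Sigma\}$. If $n \le N$, the linear span of the first $n$ eigenvectors is an $n$-dimensional subspace of $\dom A \subset \dom Q$ on which $Q[u] \le \lambda_n \|u\|_\cH^2$, giving $\mu_n(A) \le \lambda_n$. If $\Sigma < +\infty$, then $\Sigma \in \operatorname{Sp}_{ess}(A)$ forces $E_A\big((\Sigma-\eps,\Sigma+\eps)\big)$ to have infinite rank for every $\eps>0$; combined with the (at most $N$) eigenvectors lying below, this produces for each $\eps>0$ an $n$-dimensional subspace $F \subset \ran E_A\big((-\infty,\Sigma+\eps)\big) \subset \dom A \subset \dom Q$ with $Q[u] \le (\Sigma+\eps)\|u\|_\cH^2$ on $F$, hence $\mu_n(A) \le \Sigma + \eps$; letting $\eps \to 0$ yields $\mu_n(A) \le \Sigma$. (If $\operatorname{Sp}_{ess}(A) = \emptyset$, so $\Sigma = +\infty$, then $N = \infty$ and the first construction already gives the full claim.)

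\emph{Lower bound} $\mu_n(A) \ge \min\{\lambda_n,\Sigma\}$. Argue by contradiction and pick $c$ with $\mu_n(A) < c < \min\{\lambda_n,\Sigma\}$. Since $c < \Sigma$, every point of $(-\infty,c)\cap\operatorname{Sp}(A)$ is one of the $\lambda_j$, and since $c < \lambda_n \le \lambda_{n+1} \le \cdots$ only $\lambda_1,\dots,\lambda_{n-1}$ (with multiplicity) can lie below $c$; hence $P := E_A\big((-\infty,c)\big)$ has rank at most $n-1$, so $\ran(I-P)$ has codimension at most $n-1$ and therefore meets every $n$-dimensional subspace $F \subset \dom Q$ nontrivially. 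For $0 \ne u \in F \cap \ran(I-P)$ the measure $\mu_u$ is carried by $[c,+\infty)$, whence $Q[u] = \int_{[c,\infty)}\lambda\, d\mu_u(\lambda) \ge c\|u\|_\cH^2$; thus $\sup_{u \in F\setminus\{0\}} Q[u]/\|u\|_\cH^2 \ge c$ for every admissible $F$, contradicting $\mu_n(A) < c$. Combining the two bounds proves the displayed identity, and with it the proposition.

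The argument invokes nothing beyond the spectral theorem and the structure of the spectrum below $\operatorname{Sp}_{ess}(A)$, so I do not expect a serious obstacle; the only points that need care are to use the infinite-rank property of $E_A\big((\Sigma-\eps,\Sigma+\eps)\big)$ only when $\Sigma < +\infty$ (handling $\operatorname{Sp}_{ess}(A)=\emptyset$ separately, as indicated), and to check that the finite-dimensional spectral subspaces employed throughout indeed lie in $\dom Q = \dom|A|^{1/2}$, which holds because they lie in $\dom A$. Finiteness of $\mu_n(A)$ from below is automatic from the lower semi-boundedness of $Q$.
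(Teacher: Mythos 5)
Your argument is correct: the identity $\mu_n(A)=\min\{\lambda_n,\Sigma\}$ with $\Sigma=\inf\operatorname{Sp}_{ess}(A)$, proved via the spectral theorem (finite-dimensional trial spaces from spectral projections for the upper bound, the rank bound on $E_A((-\infty,c))$ plus the codimension argument for the lower bound), is the standard proof of this proposition, and all the steps you flag as delicate (the trial subspaces lying in $\dom A\subset\dom Q$, which uses lower semi-boundedness, and the infinite rank of $E_A((\Sigma-\eps,\Sigma+\eps))$ when $\Sigma<+\infty$) are handled properly. Note that the paper itself states this proposition as the classical min--max principle and gives no proof of it, so there is nothing to compare against; your write-up simply supplies the textbook argument. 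The only imprecision is the parenthetical claim that $\operatorname{Sp}_{ess}(A)=\emptyset$ forces $N=\infty$: this is true only when $\cH$ is infinite-dimensional, while for finite-dimensional $\cH$ one has $N=\dim\cH$ and the identity still holds for $n>N$ with both sides equal to $+\infty$ under the convention $\inf\emptyset=+\infty$; this edge case is irrelevant for the paper's application and does not affect the validity of your proof.
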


In order to state the main result of this paper, we introduce the one dimensional Schrödinger operator defined through its quadratic form as
\begin{equation}
	q_{e}[f] = \int_\R (|f'|^2 - \frac{\kappa^2}{\pi^2}|f|^2) ds,\quad \dom{q_{e}} = H^1(\R)
	\label{eqn:fqeff}
\end{equation}
and set $J := \sharp \{\mu_n(q_e) < 0\}$.
\begin{thm} If $J \geq 1$ then there exists $\eps_1 > 0$ such that for all $\eps \in (0,\eps_1)$ there holds
\[
	 \operatorname{Sp}_{dis}(\cD_\Gamma(\eps)) \neq \emptyset.
\]
Moreover, if $\lambda_j(\cD_\Gamma(\eps))$ denotes the $j$-th positive discrete eigenvalue of $\cD_\Gamma(\eps)$ counted with multiplicity then for all $j \in \{1,\dots,J\}$, there holds
\begin{equation}\label{eq:evasymp}
	\lambda_j(\cD_\Gamma(\eps)) = \frac{E_1(m\eps)}{\eps} + \frac{2}\pi\mu_{j}(q_e)\eps + \mathcal{O}(\varepsilon^2).
\end{equation}
\label{thm:main}
\end{thm}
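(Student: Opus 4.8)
The plan is to work with the square $\cD_\Gamma(\eps)^2$ rather than $\cD_\Gamma(\eps)$ itself, so that we can use the min-max principle of Proposition~\ref{prop:min-max}. Note that $\lambda$ is a positive discrete eigenvalue of $\cD_\Gamma(\eps)$ if and only if $\lambda^2$ is a discrete eigenvalue of $\cD_\Gamma(\eps)^2$ below $\inf\operatorname{Sp}_{ess}(\cD_\Gamma(\eps)^2) = E_1(m\eps)^2/\eps^2$, so it suffices to prove the expansion
\[
	\mu_j\bigl(\cD_\Gamma(\eps)^2\bigr) = \frac{E_1(m\eps)^2}{\eps^2} + \frac{4}{\pi^2}\mu_j(q_e) + \mathcal{O}(\eps),
\]
and then take square roots, expanding $\sqrt{E_1(m\eps)^2/\eps^2 + O(1)} = E_1(m\eps)/\eps + \frac{\pi}{2E_1(0)}\cdot\frac{4}{\pi^2}\mu_j(q_e)\,\eps + O(\eps^2)$, using $E_1(0) = k_1(0) = \pi/4$ so that the prefactor is exactly $2/\pi$. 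The first step is therefore to pull the operator back to the fixed strip $\Str$ via $\Phi_\eps$: after the change of variables the Hilbert space becomes $L^2(\Str, \C^2)$ with the weight $(1+\eps t\kappa(s))\,ds\,dt$, and the quadratic form of $\cD_\Gamma(\eps)^2$ (equivalently, of $\cD_\Gamma(\eps)$, whose form is more robust under the unitary rescaling) acquires $\eps$-dependent coefficients. It is convenient to conjugate by the scalar factor $(1+\eps t\kappa)^{1/2}$ to flatten the measure, at the price of an extra bounded geometric potential; this is the standard waveguide reduction, here in its Dirac incarnation, and one should cite or reproduce the relevant computation from \cite{BBKOB22}.

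The second step is the transverse--longitudinal separation. On the fixed strip, the leading operator is $\eps^{-2}$ times the one-dimensional transverse Dirac operator on $(-1,1)$ with infinite mass conditions, whose lowest eigenvalue is $E_1(m\eps)^2$ with a known eigenfunction $\chi_\eps(t)$ (built from the trigonometric functions entering the definition of $k_1$); the spectral gap above it is bounded below uniformly in $\eps$. I would decompose any trial state as $u(s,t) = f(s)\chi_\eps(t) + u^\perp(s,t)$ with $u^\perp$ orthogonal to $\chi_\eps$ in the transverse variable, and show that the quadratic form splits as
\[
	\frac{E_1(m\eps)^2}{\eps^2}\|u\|^2 + Q_\eps^{\mathrm{eff}}[f] + (\text{terms controlled by } \|u^\perp\|)
\]
where $Q_\eps^{\mathrm{eff}}$ is a perturbation of the one-dimensional form $\frac{4}{\pi^2} q_e[f]$; the constant $4/\pi^2$ and the curvature potential $-\kappa^2/\pi^2$ arise from the $O(\eps^2)$ term of the metric combined with the explicit transverse profile $\chi_0$ (this is where $\int_{-1}^1 t^2|\chi_0|^2$ and $\int_{-1}^1|\chi_0'|^2$ type integrals produce the numerical factors and reproduce $\cL_e$). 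Establishing the two matching bounds is then routine IMS-type localization: for the upper bound on $\mu_j(\cD_\Gamma(\eps)^2)$ one tests with $f_k(s)\chi_\eps(t)$ where $f_1,\dots,f_J$ span an approximate minimizing space for $q_e$; for the lower bound one uses that the $u^\perp$-component costs at least $(c\eps^{-2} - C)\|u^\perp\|^2$, forcing $\|u^\perp\| = O(\eps)\|u\|$ on the relevant spectral subspace, and feeds this back into the cross terms.

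The main obstacle, as usual in this kind of analysis, is the lower bound: controlling the cross terms between $f\chi_\eps$ and $u^\perp$ that are linear in the (unbounded) longitudinal derivative $\partial_s$, and absorbing them without losing the $O(\eps)$ precision. The off-diagonal terms involve $\kappa'$, $\kappa''$ and $\eps\partial_s u^\perp$; one must use hypothesis (B) together with a Cauchy--Schwarz/Young argument tuned so that the $\eps\|\partial_s u^\perp\|^2$ part is dominated by the transverse gap $c\eps^{-2}\|u^\perp\|^2$ while only generating an $O(\eps)$ error on the $f$-block. A secondary subtlety is that $\chi_\eps$ depends on $\eps$ (through $m\eps$), so one needs $\chi_\eps = \chi_0 + O(\eps)$ in a suitable norm and $E_1(m\eps) = E_1(0) + O(\eps^2)$, both of which follow from the implicit function theorem applied to $\rho\sin(2k)+k\cos(2k)=0$. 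Finally, one checks via Proposition~\ref{prop:min-max} that each $\mu_j(\cD_\Gamma(\eps)^2)$ with $j\le J$ is genuinely below the essential spectrum (because $\mu_j(q_e)<0$) and hence is an eigenvalue, yielding both the non-emptiness of the discrete spectrum and the asymptotics \eqref{eq:evasymp}.
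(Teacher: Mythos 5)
Your overall strategy (passing to $\cD_\Gamma(\eps)^2$, flattening to the strip, projecting onto the lowest transverse mode, matching upper and lower bounds via the min-max principle) is the same as the paper's, but two of your central quantitative claims are wrong, and they are not mere slips: they would derail the proof. First, the intermediate expansion should be $\mu_j(\cD_\Gamma(\eps)^2)=\frac{E_1(m\eps)^2}{\eps^2}+\mu_j(q_e\oplus q_e)+\mathcal{O}(\eps)$ with coefficient \emph{one} in front of the effective eigenvalue, not $\frac{4}{\pi^2}\mu_j(q_e)$. Indeed, the longitudinal kinetic term in the form of the square is $\int_\Str(1-\eps t\kappa)^{-2}\vert\partial_s u-i\tfrac{\kappa}{2}\sigma_3 u\vert^2$, so testing with $u=f^\pm\varphi_1^{m\eps,\pm}$ produces $\int_\R\vert f'\vert^2$ with coefficient $\|\varphi_1^{m\eps,\pm}\|_{L^2(-1,1)}^2=1$; the factor $\frac{2}{\pi}$ in \eqref{eq:evasymp} comes solely from $\frac{1}{2E_1(m\eps)}\to\frac{1}{2k_1(0)}=\frac{2}{\pi}$ when expanding the square root. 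Your own arithmetic is inconsistent here: $\sqrt{a^2/\eps^2+b}=a/\eps+\frac{b}{2a}\eps+\mathcal{O}(\eps^2)$, so with your claimed $b=\frac{4}{\pi^2}\mu_j(q_e)$ the prefactor would be $\frac{1}{2E_1(0)}\cdot\frac{4}{\pi^2}=\frac{8}{\pi^3}$, not $\frac{2}{\pi}$. Relatedly, the mechanism you invoke for the potential $-\kappa^2/\pi^2$ (the $\mathcal{O}(\eps^2)$ term of the metric and integrals like $\int t^2\vert\chi_0\vert^2$) is not the correct one: it comes from the spinorial coupling $-i\tfrac{\kappa}{2}\sigma_3$ in the longitudinal derivative together with the matrix element $\int_{-1}^1\langle\varphi_1^{\pm},\sigma_3\varphi_1^{\pm}\rangle\,dt=\pm\frac{2}{\pi}$, which turns the projected form into $\int\vert f'-i\tfrac{\kappa}{\pi}\sigma_3 f\vert^2-\tfrac{\kappa^2}{\pi^2}\vert f\vert^2$, gauge-equivalent to $q_e\oplus q_e$ (Proposition \ref{prop:ueq1D}). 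Also, $E_1(m\eps)=E_1(0)+\tfrac{2}{\pi}m\eps+\mathcal{O}(\eps^2)$, so your claim $E_1(m\eps)=E_1(0)+\mathcal{O}(\eps^2)$ is false for $m>0$ (harmless only because the term $E_1(m\eps)/\eps$ is kept unexpanded).

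Second, your decomposition $u=f(s)\chi_\eps(t)+u^\perp$ with a \emph{single} transverse profile overlooks that the lowest transverse level of the square is doubly degenerate: $\cT(m\eps)$ has the two eigenvalues $\pm E_1(m\eps)$, both contributing $E_1(m\eps)^2$, with eigenfunctions $\varphi_1^{m\eps,\pm}$. With a rank-one projection, $u^\perp$ may still contain $\varphi_1^{m\eps,\mp}$, whose transverse energy is again $E_1(m\eps)^2/\eps^2$, so your key lower-bound ingredient ``the $u^\perp$-component costs at least $(c\eps^{-2}-C)\|u^\perp\|^2$'' is simply false; one must project onto the two-dimensional span (as the paper does, leading to the gap $E_2^2-E_1^2$ via Point \eqref{itm:4-1D} of Proposition \ref{prop:1D}). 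This degeneracy also affects your reduction ``$\lambda$ is a positive eigenvalue of $\cD_\Gamma(\eps)$ iff $\lambda^2$ is an eigenvalue of the square'': because the spectrum of $\cD_\Gamma(\eps)$ is symmetric, each such $\lambda^2$ is (at least) doubly counted among the min-max levels of the square, and the correct indexing is $\lambda_j(\cD_\Gamma(\eps))=\sqrt{\mu_{2j}(\cD_\Gamma(\eps)^2)}$ together with $\mu_{2j}(q_e\oplus q_e)=\mu_j(q_e)$; without this, your eigenvalue counting does not match the statement of the theorem for $j\geq 2$.
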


\begin{rem}A situation in which $J\geq 1$ is when  $\kappa \in L^2(\R)\setminus\{0\}$. Indeed, let $\theta > 0$ and consider the map defined for $s \in \R$ by
\[
	\psi_\theta(s) : = \left\{	\begin{array}{lcl}
							\theta^{-1}(s+2\theta) & \text{if} & s\in [-2\theta,-\theta), \\
							1 & \text{if} & s\in [-\theta,\theta],\\
							-\theta^{-1}(s-2\theta) & \text{if} & s \in (\theta,2\theta],\\
							0& \text{otherwise}.
						\end{array}\right.
\]
One remarks that $\psi_\theta \in H^1(\R)$, verifies $\|\psi_\theta\|_{L^\infty(\R)} \leq 1$ and that
\[
	q_e[\psi_\theta] = \frac2\theta - \frac1{\pi^2}\int_\R \kappa^2 |\psi_\theta|^2ds\leq \frac2\theta - \frac{1}{\pi^2}\int^\theta_{-\theta}\kappa^2\,ds\,.
\]
Hence, since $\kappa\in L^2(\R)$, choosing $\theta$ sufficiently large, we get  $q_e[\psi_\theta] < 0$ and the min-max principle (Prop. \ref{prop:min-max}) gives $\mu_1(q_e) < 0$.
\end{rem}

As already remarked, Theorem \ref{thm:main} proves that as long as the curvature creates bound states for the effective operator given by the quadratic form $q_e$, it also creates bound states for the operator $\cD_\Gamma(\eps)$. Note that in Theorem \ref{thm:main} there is no term of order $0$ and the bound states are at a distance of order $\eps$ from the essential spectrum. This differs from the non-relativistic counter part of this problem studied in \cite[Thm. 5.1.]{DE95}, where the splitting of eigenvalues appears at constant order. However, the result is consistent with \cite[Thm. 4]{BBKOB22} where the authors prove that up to a unitary map, the operator \eqref{eq:operator}, suitably renormalized, behave at constant order as a massive (free) Dirac operator on the real line with effective mass $\frac2\pi m$. The spectrum of this operator being purely absolutely continuous, this is consistent with \eqref{eq:evasymp}, where the constant term arising in the expansion of $\frac{E_1(m\eps)}{\eps^2}$ is precisely given by such effective mass $\frac2\pi m$.


\subsection*
{\bf Acknowledgements.}
We acknowledge the support of Istituto Nazionale di Alta Matematica ``F. Severi", through the Intensive Period ``INdAM Quantum Meetings (IQM22)''. W.B. has been partially supported by Gruppo Nazionale per Analisi Matematica, la Probabilit\`a e le loro Applicazioni (GNAMPA) of the Istituto Nazionale di Alta Matematica (INdAM), through the 2022 project ``Modelli matematici con
singolarit\`a per fenomeni di
interazione  ''.

\section{Preliminaries}
The purpose of \S \ref{subsec:pr1} and \S \ref{pr:2} is to gather several results on one dimensional operators which play an important role in the proof of Theorem \ref{thm:main}. This proof is based on the min-max principle applied to the quadratic form of the square of $\cD_\Gamma(\eps)$, exploiting suitable lower and upper bounds for it, given in \S \ref{pr:3}.

\subsection{The effective operator}\label{subsec:pr1}
In what follows, we deal with the following one dimensional operator, defined through its quadratic form by
\begin{equation}\label{eq:effectiveform}
	\tilde{q}_{e}[f] = \int_\R \Big(|f' - i\frac{\kappa}\pi \sigma_3 f|^2 - \frac{\kappa^2}{\pi^2}|f|^2\Big)ds,\quad \dom{\tilde{q}_e} = H^1(\R,\C^2).
\end{equation}
It turns out that the spectral properties of the operator associated with $\tilde{q}_e$ are related to the one of the operator associated with $q_e$ defined in \eqref{eqn:fqeff}. Notice that the former is defined for vector valued functions, while the latter is defined for scalar ones.

\begin{prop} There exists a unitary map $U : L^2(\R,\C^2)\to L^2(\R,\C^2)$ such that for all $f = (f^+,f^-)^\top \in \dom{(q_e\oplus q_e)}$
\[
	(q_e\oplus q_e)[f] = q_e[f^+] + q_e[f^-]=  \tilde{q}_e[Uf].
\]
\label{prop:ueq1D}
\end{prop}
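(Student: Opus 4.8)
The key observation is that the cross term in $\tilde q_e$, namely $|f'-i\tfrac\kappa\pi\sigma_3 f|^2$, can be decoupled by a pointwise (in $s$) unitary gauge transformation. Since $\sigma_3=\mathrm{diag}(1,-1)$ acts diagonally on $\C^2$, the natural candidate is the multiplication operator
\[
	(Uf)(s) := \exp\Bigl(i\,\Theta(s)\,\sigma_3\Bigr) f(s),\qquad \Theta(s) := \frac1\pi\int_0^s \kappa(\tau)\,d\tau,
\]
which is unitary on $L^2(\R,\C^2)$ because $e^{i\Theta\sigma_3}$ is unitary on $\C^2$ for every $s$ (it is diagonal with entries $e^{\pm i\Theta(s)}$ of modulus one). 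First I would check that $U$ maps $H^1(\R,\C^2)$ onto itself: this holds because $\Theta\in C^1(\R)$ with $\Theta'=\kappa/\pi$ bounded (by hypothesis~(B), or simply $\kappa\in L^\infty$), so differentiating the product $e^{i\Theta\sigma_3}f$ keeps everything in $L^2$; the inverse is $U^{-1}=e^{-i\Theta\sigma_3}\cdot$, of the same form.

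Next I would compute $\tilde q_e[Uf]$. Writing $g=Uf$, we have $g' = i\Theta'\sigma_3 e^{i\Theta\sigma_3}f + e^{i\Theta\sigma_3}f' = e^{i\Theta\sigma_3}\bigl(f' + i\tfrac\kappa\pi\sigma_3 f\bigr)$, using $\Theta'=\kappa/\pi$ and the fact that $\sigma_3$ commutes with $e^{i\Theta\sigma_3}$. Hmm — this produces $f'+i\tfrac\kappa\pi\sigma_3 f$, whereas the form has $g'-i\tfrac\kappa\pi\sigma_3 g$; so the correct choice of sign in the exponent is $\Theta(s)=-\tfrac1\pi\int_0^s\kappa$, i.e. $U=e^{-i\Theta\sigma_3}$ with $\Theta=\tfrac1\pi\int_0^s\kappa$, arranged precisely so that
\[
	(Uf)' - i\frac\kappa\pi\sigma_3(Uf) = e^{\mp i\Theta\sigma_3}f'.
\]
Since $e^{\mp i\Theta\sigma_3}$ is unitary on $\C^2$ pointwise, $|(Uf)'-i\tfrac\kappa\pi\sigma_3(Uf)|^2 = |f'|^2$ and likewise $|Uf|^2=|f|^2$ pointwise. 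Therefore
\[
	\tilde q_e[Uf] = \int_\R\Bigl(|f'|^2 - \frac{\kappa^2}{\pi^2}|f|^2\Bigr)ds = q_e[f^+] + q_e[f^-] = (q_e\oplus q_e)[f],
\]
where $f=(f^+,f^-)^\top$ and the last equality is just the definition of the direct sum form together with the fact that $|f|^2=|f^+|^2+|f^-|^2$ and $|f'|^2=|(f^+)'|^2+|(f^-)'|^2$. This is exactly the claimed identity (after renaming, the statement wants $U$ such that $(q_e\oplus q_e)[f]=\tilde q_e[Uf]$, which is what we have).

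I do not anticipate a serious obstacle here; the only point requiring a little care is the domain bookkeeping — verifying that $U$ is a bijection of $H^1(\R,\C^2)$ and that the formal computation of $(Uf)'$ is legitimate, both of which reduce to $\Theta\in C^1$ with bounded derivative and the Leibniz rule for $H^1$ functions times $C^1$ multipliers. One should also record that, as a consequence, the self-adjoint operators associated with $q_e\oplus q_e$ and with $\tilde q_e$ are unitarily equivalent via the same $U$ (by Kato's representation theorem and uniqueness), so in particular they have the same spectrum and the same min-max values; this is what will actually be used later in the proof of Theorem~\ref{thm:main}.
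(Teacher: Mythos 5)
Your strategy is exactly the paper's: conjugate by the diagonal gauge factor $e^{i\Theta\sigma_3}$ with $\Theta'=\kappa/\pi$, use that it is pointwise unitary on $\C^2$, and check it is a bijection of $H^1(\R,\C^2)$ (which your remark $\kappa\in L^\infty$ covers). However, your mid-proof sign ``correction'' is itself the error, and taken literally it destroys the identity you need. Your first computation was already finished: with $Uf=e^{i\Theta\sigma_3}f$ one has
\[
(Uf)'-i\frac{\kappa}{\pi}\sigma_3(Uf)
= i\frac{\kappa}{\pi}\sigma_3 e^{i\Theta\sigma_3}f + e^{i\Theta\sigma_3}f' - i\frac{\kappa}{\pi}\sigma_3 e^{i\Theta\sigma_3}f
= e^{i\Theta\sigma_3}f',
\]
because the term produced by differentiating the exponential is cancelled by the subtracted term $i\frac{\kappa}{\pi}\sigma_3(Uf)$ that is already present in $\tilde q_e$. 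You compared $g'=(Uf)'$ alone with the covariant derivative $g'-i\frac{\kappa}{\pi}\sigma_3 g$ and concluded a sign flip was needed; but with your final choice $U=e^{-i\Theta\sigma_3}$ (phase $-\frac1\pi\int_0^s\kappa$) the same computation gives
\[
(Uf)'-i\frac{\kappa}{\pi}\sigma_3(Uf)=e^{-i\Theta\sigma_3}\Big(f'-\frac{2i\kappa}{\pi}\sigma_3 f\Big),
\]
whose modulus is not $|f'|$ in general, so $\tilde q_e[Uf]\neq(q_e\oplus q_e)[f]$ for that $U$.

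Reverting to $U=e^{i\Theta\sigma_3}$ repairs the argument, and everything else you wrote (unitarity, the $H^1$ bookkeeping via the Leibniz rule with a $C^1$ multiplier of bounded derivative, the pointwise identity $|Uf|=|f|$, and the consequence that the associated self-adjoint operators are unitarily equivalent, hence have the same min-max values) is correct and is exactly the paper's proof, where the gauge factor is written $e^{i\frac{\rho}{2}\sigma_3}$ with $\rho=\int_0^s\kappa$; note that to match the coefficient $\kappa/\pi$ appearing in $\tilde q_e$ the exponent should indeed carry the factor $1/\pi$, as in your original (pre-correction) choice.
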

\begin{proof} Let us consider the following gauge transform
\[
	U : L^2(\R^2,\C^2) \to L^2(\R^2,\C^2),\quad (Uf) = e^{i \frac{\rho}{2}\sigma_3}f,
\]
where for all $s\in \R$ we have set $\rho(s) = \int_0^s \kappa(\eta) d\eta$. Remark that there holds
\[
	|(Uf)' - i \frac{\kappa}2 \sigma_3(Uf)|= |i \frac{\kappa}2 \sigma_3e^{i \frac{\rho}{2}\sigma_3}f + e^{i \frac{\rho}{2}\sigma_3}f' - i \frac{\kappa}2 \sigma_3e^{i \frac{\rho}{2}\sigma_3}f| = |e^{i \frac{\rho}{2}\sigma_3}f'| = |f'|
\]
because for all $s\in \R$, $e^{i \frac{\rho(s)}{2}\sigma_3}$ is a unitary matrix. For the same reason, there holds $|(Uf)| = |f|$ and this yields
\[
	(q_e \oplus q_e) [f] = \tilde{q}_e[f].
\]
\end{proof}
\subsection{The transverse Dirac operator}\label{pr:2}
When proving Theorem \ref{thm:main}, we need to use some spectral properties of a one dimensional operator. It is defined for $m \geq 0$ by
\begin{multline*}
\cT(m) := -i\sigma_2 \frac{d}{dt} + m\sigma_3,\\ \dom{\cT(m)} := \{ u = (u_1,u_2)^\top \in H^1((-1,1),\C^2) : u_2(\pm 1) = \mp u_1(\pm 1)\}.
\end{multline*}
The following proposition holds true.

\begin{prop} Let $m \geq 0$. The operator $\cT(m)$ is self-adjoint and has compact resolvent. Moreover the following holds:
\begin{enumerate}
	\item\label{itm:1-1D}for all $u \in \dom{\cT(m)}$ there holds
		\begin{equation}\label{eq:transverseform}
			\|\cT(m) u\|^2_{L^2(-1,1)} = \|u'\|^2_{L^2(-1,1)} + m^2 \|u\|^2_{L^2(-1,1)} + m (|u(1)|^2 + |u(-1)|^2),
		\end{equation}
	\item\label{itm:2-1D}$Sp\big(\cT(m)\big)\cap [-m,m] = \emptyset$,
	\item\label{itm:4-1D} for all $p\geq 1$, define $k_p(m)$ as the only root lying in $[(2p-1)\frac{\pi}{4},p\frac{\pi}2]$ of
	\[
		m\sin(2 k) + k\cos(2 k) = 0,
	\]
	now if one sets $E_p(m) = \sqrt{m^2 + k_p(m)^2}$ there holds ${Sp\big(\cT(m)\big) = \bigcup_{p\geq 1}\{\pm E_p(m)\}}$,
	\item\label{itm:5-1D} there holds
	\[
	k_1(m) = \frac\pi4 + \frac2\pi m - \frac{16}{\pi^3}{m^2} + \mathcal{O}(m^3),
	\]
	\item\label{itm:6-1D} for $p\geq 1$, a normalized eigenfunction associated with $E_p(m)$ is given by
	\[
		\varphi_p^{m,+}(t) := N_{m,p}\Big(k_p \cos(k_p(t+1))\begin{pmatrix}1\\1\end{pmatrix} + \sin(k_p(t+1))\begin{pmatrix}E_p +m\\-(E_p - m)\end{pmatrix}\Big)
	\]
	where $N_{m,p}$ is a normalization constant. We consider $\varphi_p^{m,-} := \sigma_1 \varphi_p^{m,+}$; a normalized eigenfunction associated with $-E_p(m)$ and if one sets $\varphi_p^{\pm} := \varphi_p^{0,\pm}$ there holds
	\[
		\varphi_1^{m,\pm} = \varphi_1^{\pm} + \mathcal{O}(m),
	\]
	where the remainder is understood in the $L^\infty$-norm on $(-1,1)$.
\end{enumerate}
\label{prop:1D}
\end{prop}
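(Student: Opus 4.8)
The plan is to treat $\cT(m)$ as an explicit one-dimensional Dirac operator on the finite interval $(-1,1)$ and to reduce everything to an ODE eigenvalue problem with the given boundary conditions. First I would establish self-adjointness: since $\cT(m)$ is a first-order differential operator with smooth (constant) coefficients on a bounded interval, one checks that the boundary conditions $u_2(\pm1)=\mp u_1(\pm1)$ are exactly those making the boundary form $\langle -i\sigma_2 u', v\rangle - \langle u, -i\sigma_2 v'\rangle$ vanish on $\dom{\cT(m)}$; a short integration-by-parts computes this boundary form as a quadratic expression in $u_1(\pm1),u_2(\pm1)$ and one sees the stated relations annihilate it and are maximal with this property. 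Compactness of the resolvent follows from the compact embedding $H^1((-1,1),\C^2)\hookrightarrow L^2((-1,1),\C^2)$. For item \eqref{itm:1-1D}, I would compute $\|\cT(m)u\|^2 = \|{-i\sigma_2 u'}\|^2 + m^2\|u\|^2 + 2m\,\mathrm{Re}\langle -i\sigma_2 u', \sigma_3 u\rangle$, using $\sigma_2\sigma_3 + \sigma_3\sigma_2 = 0$ so the cross term is a total derivative; integrating it produces the boundary term $m(|u(1)|^2+|u(-1)|^2)$ once the boundary conditions are used to evaluate the relevant bilinear form at $\pm1$, giving exactly \eqref{eq:transverseform}. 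Item \eqref{itm:2-1D} is then immediate: \eqref{eq:transverseform} shows $\|\cT(m)u\|^2 \geq m^2\|u\|^2$, so no spectrum in $(-m,m)$, and $\pm m$ are excluded by noting equality would force $u'\equiv 0$ and $u(\pm1)=0$, hence $u\equiv0$.

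For the spectrum itself (item \eqref{itm:4-1D}), I would solve $\cT(m)u = Eu$ explicitly. Writing the system for $(u_1,u_2)$ and decoupling, each component satisfies $-u_j'' = (E^2-m^2)u_j$, so with $k := \sqrt{E^2-m^2}$ the solutions are trigonometric; plugging the general solution into $-i\sigma_2 u' + m\sigma_3 u = Eu$ fixes the relation between the two components, and imposing the two boundary conditions $u_2(\pm1) = \mp u_1(\pm1)$ yields a $2\times2$ homogeneous linear system whose determinant, after simplification, reduces to $m\sin(2k) + k\cos(2k) = 0$ (the symmetry $E\mapsto -E$ via $\sigma_1$ accounts for the $\pm$). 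Counting roots in the intervals $[(2p-1)\pi/4, p\pi/2]$ is an elementary monotonicity/intermediate-value argument on $g(k) = m\tan(2k) + k$ (or directly on $m\sin 2k + k\cos 2k$), giving one root $k_p(m)$ per interval and $E_p(m) = \sqrt{m^2+k_p(m)^2}$. Item \eqref{itm:6-1D} is bookkeeping: the eigenfunction is read off from the kernel of the $2\times2$ system; that the displayed $\varphi_p^{m,+}$ solves the ODE and boundary conditions is a direct substitution, $\varphi_p^{m,-} := \sigma_1\varphi_p^{m,+}$ works because $\sigma_1$ anticommutes with $\sigma_2$ and $\sigma_3$, and the continuity statement $\varphi_1^{m,\pm} = \varphi_1^{\pm} + \mathcal{O}(m)$ follows once $k_1(m)\to\pi/4$ and the normalization constant depend continuously (indeed smoothly) on $m$.

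The only genuinely analytic point is item \eqref{itm:5-1D}, the second-order Taylor expansion of $k_1(m)$ at $m=0$. I would apply the implicit function theorem to $F(k,m) := m\sin(2k) + k\cos(2k)$ near $(k,m) = (\pi/4,0)$: one has $F(\pi/4,0)=0$ and $\partial_k F(\pi/4,0) = \cos(\pi/2) - 2(\pi/4)\sin(\pi/2) = -\pi/2 \neq 0$, so $k_1(m)$ is smooth in $m$ and I would extract the coefficients by differentiating $F(k_1(m),m)\equiv0$ twice in $m$ at $0$, or equivalently by inserting the ansatz $k_1(m) = \pi/4 + am + bm^2 + \mathcal{O}(m^3)$ into $F=0$ and matching powers of $m$; the linear term gives $a = 2/\pi$ and the quadratic term gives $b = -16/\pi^3$. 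I expect this expansion to be the main (though still routine) computational obstacle, since it requires keeping careful track of the trigonometric Taylor coefficients; everything else is standard one-dimensional Dirac/Sturm--Liouville analysis.
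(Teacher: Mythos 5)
Your proposal is correct in substance, but it takes a more self-contained route than the paper: for self-adjointness, compact resolvent, the identity \eqref{eq:transverseform}, the spectral gap, the transcendental eigenvalue condition and the explicit form of the eigenfunctions, the paper simply invokes \cite[Proposition 10]{BBKOB22}, whereas you re-derive all of this from scratch (boundary-form computation for self-adjointness, expansion of $\|\cT(m)u\|^2$ with the anticommutation $\sigma_2\sigma_3+\sigma_3\sigma_2=0$ turning the cross term into a boundary contribution, explicit ODE solving and a determinant condition for the spectrum). That buys independence from the earlier paper at the cost of more routine verification. For the two items the paper actually proves in detail, your method coincides with theirs: Point \eqref{itm:5-1D} via the implicit function theorem applied to $F(k,m)=m\sin(2k)+k\cos(2k)$ at $(\pi/4,0)$ with $\partial_kF(\pi/4,0)=-\pi/2\neq 0$ and Taylor matching (the paper phrases it as $k_1(0)=\pi/4$, $k_1'(0)=2/\pi$, $k_1''(0)=-32/\pi^3$, equivalent to your $b=-16/\pi^3$), and Point \eqref{itm:6-1D} via the explicit eigenfunction, the asymptotics of the normalization constant, and smooth dependence on $m$. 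A few small points you should tighten if you write this up: in Point \eqref{itm:2-1D} your exclusion of the endpoints reads ``$u'\equiv 0$ and $u(\pm1)=0$'', which is only what \eqref{eq:transverseform} gives when $m>0$; for $m=0$ the identity only forces $u$ constant, and you must then use the boundary conditions $u_2(\pm1)=\mp u_1(\pm1)$ to conclude $u\equiv 0$ (and you should say explicitly that compact resolvent reduces the spectral statement to eigenvalues). In Point \eqref{itm:4-1D}, to get the full equality $\operatorname{Sp}(\cT(m))=\bigcup_{p\geq1}\{\pm E_p(m)\}$ you also need to check that $m\sin(2k)+k\cos(2k)$ has no positive roots outside the intervals $[(2p-1)\tfrac\pi4,p\tfrac\pi2]$ (a sign check on the complementary intervals). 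Finally, for $\varphi_p^{m,-}=\sigma_1\varphi_p^{m,+}$ you should verify not only the anticommutation $\sigma_1\cT(m)\sigma_1=-\cT(m)$ but also that $\sigma_1$ preserves the boundary conditions. These are routine and do not affect the validity of your approach.
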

\begin{proof} The proof of Points \eqref{itm:1-1D}-\eqref{itm:4-1D} can be found, {\it e.g.}, in \cite[Proposition 10]{BBKOB22}. Point \eqref{itm:5-1D} relies on the fact that
\begin{equation}
	m\sin(2k_1(m)) + k_1(m)\cos(2k_1(m)) = 0.
	\label{eqn:implicit}
\end{equation}
Hence, as $k_1$ is defined near $m=0$ by this smooth implicit equation, $k_1$ is smooth near $m = 0$ and there holds
\[
	k_1(m) = k_1(0) + k_1'(0) m + \frac{1}2k_1''(0)m^2 + \mathcal{O}(m^3),\quad m \to 0.
\]
One can compute thanks to \eqref{eqn:implicit} that
\[
	k_1(0) = \frac\pi4,\quad k_1'(0) = \frac2\pi,\quad k_1''(0) = - \frac{32}{\pi^3},
\]
which yields Point \eqref{itm:5-1D}. To prove Point \eqref{itm:6-1D}, again by \cite[Proposition 10]{BBKOB22}, any eigenfunction associated with $E_p(m)$ is of the form
\[
	u_p^m(t) = \cos(k_p(m)(t+1)) \begin{pmatrix}\alpha\\\frac{k_p(m)}{E_p(m)+m}\beta\end{pmatrix} + \sin(k_p(m)(t+1))\begin{pmatrix}\beta\\ - \frac{k_p(m)}{E_p(m)+m}\alpha\end{pmatrix},
\]
for some constants $\alpha,\beta \in \C$. The boundary condition at $t = - 1$ gives ${\alpha = \frac{k_p(m)}{E_p(m)+m}\beta}$ so that, choosing $\beta = (E_p(m) +m)$:
\[
	u_p^m(t) = k_p(m)\cos(k_p(m)(t+1)) \begin{pmatrix}1\\1\end{pmatrix} + \sin(k_p(m)(t+1))\begin{pmatrix}E_p(m)+m\\ -(E_p(m) - m)\end{pmatrix}.
\]
Hence, we take $N_{m,p} := \|u_p^m\|_{L^2((-1,1),\C^2)}^{-1}$ and remark that $\varphi_p^{m,+} := N_{m,p} u_p^m$. Note that
\begin{multline*}
	\|u_1^m\|_{L^2((-1,1),\C^2)}^2 = 2 k_1(m)^2 \big(1 + \frac{\sin(4 k_1(m))}{4k_1(m)}\big) + 2 (E_1(m)^2 +m^2) \big(1 - \frac{\sin(4 k_1(m))}{4k_1(m)}\big) \\+  2 m k_1(m) \big(1 - \frac{\cos(4k_1(m))}{2k_1(m)}\big)
\end{multline*}
which gives using Point \eqref{itm:5-1D}
\begin{equation}
	N_{1,m} = \frac{1}{2k_1(m)} + \mathcal{O}(m),\quad m \to 0.
	\label{eqn:danorm}
\end{equation}
Now, remark that for all $t \in (-1,1)$ there holds
\[
	|\varphi_1^{m,+}(t) - \varphi_1^+(t)| \leq 2\Big|k_1(m) N_{m,p} - \frac12\Big| + 2\Big|E_1(m) N_{m,p} - \frac12\Big| + 2m.
\]
which gives Point \eqref{itm:6-1D} for $\varphi_1^{m,+}$ thanks to \eqref{eqn:danorm}. For $\varphi_1^{m,-}$ one only has to note that for all $t\in(-1,1)$ there holds
$|\varphi_1^{m,-}(t) - \varphi_1^-(t)| = |\sigma_1(\varphi_1^{m,-}(t) - \varphi_1^-(t))| = |\varphi_1^{m,+}(t) - \varphi_1^+(t)|$.
\end{proof}
\begin{rem} The explicit expression of the functions $\varphi_1^\pm$ is of crucial importance in what follows. They are defined, for all $t \in (-1,1)$, as\begin{equation}
	\label{eqn:defphipm}
	\varphi_1^\pm(t) = \frac12 \cos(\frac\pi4(t+1))\begin{pmatrix} 1 \\ 1 \end{pmatrix} \pm \frac12 \sin(\frac\pi4(t+1)) \begin{pmatrix}1 \\ - 1\end{pmatrix}.
\end{equation}
\end{rem}
\subsection{The quadratic form of the square}\label{pr:3}
By \cite[Prop. 3]{BBKOB22} we know that the operator $\cD_\Gamma(\varepsilon)$ is unitarily equivalent to
\begin{multline*}
\cE_\Gamma(\varepsilon) := \frac1{1-\varepsilon t \kappa}(-i\sigma) \partial_s + \frac1\varepsilon(-i\sigma_2)\partial_t + \frac{\varepsilon t \kappa'}{2(1-\varepsilon t \kappa)^2}(-i\sigma_1) + m\sigma_3,\\
\dom{\cE_\Gamma(\varepsilon) := \{u = (u_1,u_2)^\top \in H^1(\Str,\C^2) : u_2(\cdot,\pm 1) =\mp u_1(\cdot,\pm 1)\}}
\end{multline*}
and that the quadratic form of its square is given, for every
$u \in \dom{\cE_\Gamma(\eps)}$, by
\begin{equation}\label{eqn:fqunit}\begin{split}
\|\cE_\Gamma(\eps) u\|_{L^2(\Str,\C^2)}^2= \ &\int_{\Str}\frac1{(1-\varepsilon t \kappa)^2}\vert\partial_s u - i\frac\kappa2\sigma_3 u\vert^2 ds dt + \frac1{\varepsilon^2}\int_\Str \vert\partial_t u\vert^2 dsdt\\
&+\frac{m}{\eps}\int_{\R}\left(\vert u(s,1)\vert^2+ \vert u(s,-1)\vert^2 \right)ds+m^2\|u\|_{L^2(\Str,\C^2)}^2 \\
&- \int_{\Str}\frac{\kappa^2}{4(1-\varepsilon t \kappa)^2}\vert u\vert^2 ds dt-\frac{5}{4}\int_{\Str}\frac{(\eps t\kappa')^2}{(1-\eps t\kappa)^4}\vert u\vert^2dsdt\\&-\frac{1}{2}\int_{\Str}\frac{\eps t\kappa''}{(1-\eps t\kappa)^3} \vert u\vert^2dsdt\,.
\end{split}
\end{equation}
The main result of this section reads as follows. 
\begin{prop} There exists $\varepsilon' > 0$ and $c > 0$ such that for all $\eps \in (0,\eps')$ and all $u \in \dom{\cE_\Gamma(\varepsilon,m)}$ there holds
\begin{equation}\label{eq:bounds}
	a_-[u] \leq  \|\cE_\Gamma(\eps) u\|_{L^2(\Str,\C^2)}^2 \leq a_+[u],
\end{equation}
where we have introduced the quadratic forms $a_\pm$ defined by
\begin{multline*}
	a_\pm[u] := (1\pm c\varepsilon)\int_\Str\Big(|\partial_s u - i\frac{\kappa}{2}\sigma_3 u|^2 - \frac{\kappa^2}{4}|u|^2\Big) ds dt + \frac1{\varepsilon^2}\int_\R \big(\|\cT(m\varepsilon)u\|_{L^2((-1,1),\C^2)}^2 \big)ds \pm c \varepsilon \|u\|^2,\\\dom{a_\pm} := \dom{\cE_\Gamma(\varepsilon)}.
\end{multline*}
\label{prop:encfq}
\end{prop}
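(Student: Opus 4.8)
The plan is to start from the exact identity \eqref{eqn:fqunit} for $\|\cE_\Gamma(\eps)u\|^2$ and estimate, term by term, the discrepancy between the true coefficients and the ``frozen'' ones appearing in $a_\pm$. The key geometric input is that for $\eps \in (0,\eps')$ with $\eps' < \eps_0 = \|\kappa\|_{L^\infty}^{-1}$ small enough, the factor $1-\eps t\kappa(s)$ is bounded above and below: there is $c_0>0$ with $|1-\eps t\kappa(s) - 1| \leq c_0\eps$ uniformly in $(s,t)\in\Str$, hence $\big|(1-\eps t\kappa)^{-2} - 1\big| \leq c\eps$ and, more generally, $\big|(1-\eps t\kappa)^{-j} - 1\big| \leq c\eps$ for $j=2,3,4$. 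Applying this to the first term gives
\[
	\Big|\int_\Str\Big(\tfrac1{(1-\eps t\kappa)^2}-1\Big)\big|\partial_s u - i\tfrac\kappa2\sigma_3 u\big|^2\,ds\,dt\Big| \leq c\eps \int_\Str \big|\partial_s u - i\tfrac\kappa2\sigma_3 u\big|^2\,ds\,dt,
\]
and similarly the curvature potential term $-\int_\Str \tfrac{\kappa^2}{4(1-\eps t\kappa)^2}|u|^2$ differs from $-\int_\Str\tfrac{\kappa^2}{4}|u|^2$ by at most $c\eps\|u\|^2$ (using $\kappa\in L^\infty$).

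Next I would handle the three ``lower-order'' terms — the ones carrying explicit powers of $\eps$ from $\kappa'$ and $\kappa''$. Using hypothesis (B), i.e. $\kappa',\kappa''\in L^\infty(\R)$, together with $|t|\leq 1$ and the bounds on $(1-\eps t\kappa)^{-j}$, one gets
\[
	\frac54\int_\Str \frac{(\eps t\kappa')^2}{(1-\eps t\kappa)^4}|u|^2\,ds\,dt \leq c\eps^2\|u\|^2 \leq c\eps\|u\|^2, \qquad \frac12\Big|\int_\Str\frac{\eps t\kappa''}{(1-\eps t\kappa)^3}|u|^2\,ds\,dt\Big| \leq c\eps\|u\|^2.
\]
Collecting these estimates, all the error terms are absorbed into $\pm c\eps\int_\Str|\partial_s u - i\tfrac\kappa2\sigma_3 u|^2 \pm c\eps\|u\|^2$; after possibly enlarging $c$ and shrinking $\eps'$ one also absorbs the $\pm c\eps$ multiplying the $-\tfrac{\kappa^2}{4}|u|^2$ piece, which produces exactly the prefactor $(1\pm c\eps)$ in front of the whole bracket $\int_\Str(|\partial_s u - i\tfrac\kappa2\sigma_3 u|^2 - \tfrac{\kappa^2}{4}|u|^2)\,ds\,dt$ in $a_\pm$. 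Finally, the transverse term: the middle line of \eqref{eqn:fqunit} is $\tfrac1{\eps^2}\int_\Str|\partial_t u|^2 + \tfrac m\eps\int_\R(|u(s,1)|^2+|u(s,-1)|^2)\,ds + m^2\|u\|^2$, and comparing with the identity \eqref{eq:transverseform} of Proposition \ref{prop:1D}\eqref{itm:1-1D} applied to $t\mapsto u(s,t)$ with mass parameter $m\eps$, one sees
\[
	\frac1{\eps^2}\|\cT(m\eps)u(s,\cdot)\|^2_{L^2(-1,1)} = \frac1{\eps^2}\|\partial_t u(s,\cdot)\|^2 + m^2\|u(s,\cdot)\|^2 + \frac m\eps\big(|u(s,1)|^2 + |u(s,-1)|^2\big),
\]
so the transverse part of \eqref{eqn:fqunit} equals exactly $\tfrac1{\eps^2}\int_\R\|\cT(m\eps)u(s,\cdot)\|^2_{L^2(-1,1)}\,ds$, matching $a_\pm$ with no error at all.

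The main obstacle, and the only slightly delicate point, is the bookkeeping that turns the additive error $\pm c\eps\int_\Str|\partial_s u - i\tfrac\kappa2\sigma_3 u|^2$ into a \emph{multiplicative} factor $(1\pm c\eps)$ in front of the full bracket including the negative term $-\tfrac{\kappa^2}{4}|u|^2$: one must check that $(1\pm c\eps)\int_\Str(-\tfrac{\kappa^2}{4}|u|^2)$ differs from $\int_\Str(-\tfrac{\kappa^2}{4}|u|^2)$ only by something controlled by $c\eps\|u\|^2$, which is immediate from $\kappa\in L^\infty$ — so this is really just a matter of organizing constants. One should also be a little careful that the first term of $a_-$ could a priori be negative (since $q_e$-type forms are not positive), but that is harmless: the estimate $\|\cE_\Gamma(\eps)u\|^2 \geq a_-[u]$ is still a genuine two-sided sandwich because every discrepancy has been bounded in absolute value. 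I would therefore fix $\eps' \in (0,\eps_0)$ small enough that $c_0\eps' < \tfrac12$ (guaranteeing invertibility and all the $(1-\eps t\kappa)^{-j}$ bounds), choose $c$ as the sum of the finitely many constants produced above, and conclude \eqref{eq:bounds}.
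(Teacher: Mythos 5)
Your argument is correct and follows exactly the route the paper sketches: start from the identity \eqref{eqn:fqunit}, use $(1-\eps t\kappa)^{-1}=1+\mathcal{O}(\eps)$ uniformly (via $\kappa\in L^\infty$ and $\eps'<\eps_0$) together with hypothesis (B) to absorb the $\kappa'$, $\kappa''$ and potential discrepancies into the $(1\pm c\eps)$ prefactor and the $\pm c\eps\|u\|^2$ term, and identify the transverse block exactly through \eqref{eq:transverseform} applied with mass $m\eps$. Your fleshed-out bookkeeping, including the remark on converting additive errors into the multiplicative factor, is precisely the ``straightforward'' verification the paper leaves to the reader.
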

The proof of Proposition \ref{prop:encfq} is straightforward taking into account \eqref{eqn:fqunit} and the fact that $\kappa,\kappa', \kappa'' \in L^\infty(\R)$.  To this aim, observe that $(1-\varepsilon t\kappa(s))^{-1} = 1+\mathcal O(\varepsilon)$, uniformly in $(s,t)\in \R\times[-1,1]$, and recall \eqref{eq:transverseform}. The bounds \eqref{eq:bounds} will be used to get the desired spectral asymptotics, by comparison.
\section{Proof of the main result}
In \S \ref{pr:ub} we give an upper bound on the $j$-th min-max level of $\cD_\Gamma(\eps)^2$, while a lower bound is obtained in \S \ref{pr:lb}. Combining these results, Theorem \ref{thm:main} is proved in \S \ref{pr:pr}.

\subsection{An upper bound}\label{pr:ub}
The goal of this paragraph is to prove the following Proposition.
\begin{prop} Let $j\in\N$. There exists $\varepsilon_1 > 0$ and $c > 0$ such that for all $\varepsilon \in (0,\varepsilon_1)$ there holds
\begin{equation}\label{eq:upbound}
	\mu_j(\cD_\Gamma(\varepsilon)^2) \leq \frac{E_1(m\varepsilon)^2}{\eps^2} + \mu_j(q_e\oplus q_e) + c \varepsilon.
\end{equation}
\label{prop:ub}
\end{prop}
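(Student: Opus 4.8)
The plan is to combine the min-max principle with the upper bound of Proposition~\ref{prop:encfq} and an explicit trial space built from the transverse ground states $\varphi_1^{m\eps,\pm}$ of Proposition~\ref{prop:1D}. Since $\cD_\Gamma(\eps)$ is unitarily equivalent to $\cE_\Gamma(\eps)$, one has $\mu_j(\cD_\Gamma(\eps)^2) = \mu_j(\cE_\Gamma(\eps)^2)$; because the quadratic form of $\cE_\Gamma(\eps)^2$ is dominated by $a_+$ on $\dom{a_+} = \dom{\cE_\Gamma(\eps)}$, the min-max principle yields $\mu_j(\cD_\Gamma(\eps)^2)\le\mu_j(a_+)$, so it is enough to exhibit a $j$-dimensional subspace $F_\eps\subset\dom{a_+}$ on which $a_+[u]/\|u\|^2\le E_1(m\eps)^2/\eps^2 + \mu_j(q_e\oplus q_e) + c\eps$. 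If $\mu_j(q_e\oplus q_e) = \inf\operatorname{Sp}_{ess}(q_e\oplus q_e) = 0$ there is nothing to prove, since Proposition~\ref{prop:min-max} always gives $\mu_j(\cD_\Gamma(\eps)^2)\le\inf\operatorname{Sp}_{ess}(\cD_\Gamma(\eps)^2) = E_1(m\eps)^2/\eps^2$. Hence I may assume $\mu_j(q_e\oplus q_e)$ is the $j$-th eigenvalue of $q_e\oplus q_e$ and pick a $j$-dimensional subspace $G\subset H^1(\R,\C^2)$ spanned by associated eigenfunctions, so that $(q_e\oplus q_e)[h]\le\mu_j(q_e\oplus q_e)\|h\|^2$ for all $h\in G$.

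Next, using the gauge unitary $U$ of Proposition~\ref{prop:ueq1D} (which maps $H^1(\R,\C^2)$ into itself), I set $\tilde G := UG$; by Proposition~\ref{prop:ueq1D} this is a $j$-dimensional subspace of $H^1(\R,\C^2)$ with $\tilde q_e[g]\le\mu_j(q_e\oplus q_e)\|g\|^2$ for every $g = (g^+,g^-)^\top\in\tilde G$. To such a $g$ I associate the test function
\[
	u_g(s,t) := g^+(s)\,\varphi_1^{m\eps,+}(t) + g^-(s)\,\varphi_1^{m\eps,-}(t).
\]
Since $\varphi_1^{m\eps,\pm}\in H^1((-1,1),\C^2)$ satisfy the boundary conditions defining $\dom{\cT(m\eps)}$, one checks $u_g\in\dom{\cE_\Gamma(\eps)} = \dom{a_+}$, and $g\mapsto u_g$ is injective because $\varphi_1^{m\eps,+}$ and $\varphi_1^{m\eps,-}$ are eigenfunctions of $\cT(m\eps)$ for the distinct eigenvalues $\pm E_1(m\eps)$, hence orthonormal in $L^2((-1,1),\C^2)$. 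Taking $F_\eps$ to be the image, I get $\dim F_\eps = j$ and $\mu_j(a_+)\le\sup_{g\in\tilde G\setminus\{0\}} a_+[u_g]/\|u_g\|^2$.

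It then remains to estimate $a_+[u_g]$. Orthonormality of $\varphi_1^{m\eps,\pm}$ gives $\|u_g\|_{L^2(\Str,\C^2)}^2 = \|g\|_{L^2(\R,\C^2)}^2$, and, since $\cT(m\eps)\varphi_1^{m\eps,\pm} = \pm E_1(m\eps)\varphi_1^{m\eps,\pm}$, also $\eps^{-2}\int_\R\|\cT(m\eps)u_g(s,\cdot)\|_{L^2(-1,1)}^2\,ds = (E_1(m\eps)^2/\eps^2)\|g\|_{L^2(\R,\C^2)}^2$ \emph{exactly}; this is precisely why the $m\eps$-dependent profiles (rather than $\varphi_1^\pm$) are needed. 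For the remaining, ``horizontal'', term $b[u] := \int_\Str\bigl(|\partial_s u - i\tfrac\kappa2\sigma_3 u|^2 - \tfrac{\kappa^2}{4}|u|^2\bigr)\,ds\,dt$ of $a_+$, I would first replace $\varphi_1^{m\eps,\pm}$ by $\varphi_1^\pm$: by Proposition~\ref{prop:1D}\eqref{itm:6-1D} and a routine perturbation estimate for the sesquilinear form of $b$ (only $\partial_s$, not $\partial_t$, appears, and $\kappa\in L^\infty$), this costs at most $\mathcal{O}(\eps)\,\|g\|_{H^1(\R,\C^2)}^2$. Then, using the explicit expression \eqref{eqn:defphipm}, the key computation is the identities
\[
	\langle\varphi_1^\pm,\sigma_3\varphi_1^\pm\rangle_{L^2(-1,1)} = \pm\tfrac2\pi, \qquad \langle\varphi_1^+,\sigma_3\varphi_1^-\rangle_{L^2(-1,1)} = 0 = \langle\varphi_1^+,\varphi_1^-\rangle_{L^2(-1,1)};
\]
expanding $b[g^+\varphi_1^+ + g^-\varphi_1^-]$ and integrating in $t$, the factor $\pm\tfrac2\pi$ converts the cross terms between $\partial_s u$ and $\sigma_3 u$ into exactly those of $|g' - i\tfrac\kappa\pi\sigma_3 g|^2$, and one obtains the \emph{exact} identity $b[g^+\varphi_1^+ + g^-\varphi_1^-] = \tilde q_e[g]$. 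Collecting the three contributions and using that $\tilde q_e[g]\le\mu_j(q_e\oplus q_e)\|g\|_{L^2}^2\le 0$ on $\tilde G$ (which lets the $(1+c\eps)$ factor be absorbed into the error), one gets $a_+[u_g]\le\mu_j(q_e\oplus q_e)\|g\|_{L^2}^2 + (E_1(m\eps)^2/\eps^2)\|g\|_{L^2}^2 + \mathcal{O}(\eps)\|g\|_{H^1}^2$; since $\|g\|_{H^1}^2\le C_{\tilde G}\|g\|_{L^2}^2$ on the finite-dimensional space $\tilde G$ with $C_{\tilde G}$ independent of $\eps$, dividing by $\|u_g\|^2 = \|g\|_{L^2}^2$ gives $a_+[u_g]/\|u_g\|^2\le E_1(m\eps)^2/\eps^2 + \mu_j(q_e\oplus q_e) + c\eps$ uniformly in $g\in\tilde G\setminus\{0\}$, which is \eqref{eq:upbound}.

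The step I expect to be the main obstacle is the exact identity $b[g^+\varphi_1^+ + g^-\varphi_1^-] = \tilde q_e[g]$: it forces one to keep track of the imaginary cross-terms (so $g^\pm$ cannot be taken real), and it is the point at which the constant $2/\pi$ --- hence the precise geometric potential $\kappa^2/\pi^2$ of $q_e$ --- is generated. A secondary technical point is making the passage from $\varphi_1^{m\eps,\pm}$ to $\varphi_1^\pm$ quantitative with an error $\mathcal{O}(\eps)$ that is uniform over $\tilde G$, which is exactly where the finite dimensionality of $\tilde G$ --- controlling $\|g'\|_{L^2}/\|g\|_{L^2}$ --- is used.
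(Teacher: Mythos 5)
Your proposal is correct and follows essentially the same route as the paper: trial states $g^+\varphi_1^{m\eps,+}+g^-\varphi_1^{m\eps,-}$ plugged into the upper form $a_+$ of Proposition~\ref{prop:encfq}, the exact transverse contribution $E_1(m\eps)^2/\eps^2$, the $\pm\tfrac2\pi$ cross-term computation identifying the horizontal part with $\tilde q_e$, and the gauge transform of Proposition~\ref{prop:ueq1D} together with the min-max principle. The only (harmless) variations are that you pass to the $m=0$ profiles $\varphi_1^\pm$ to get an exact identity with $\tilde q_e$ and work on a fixed eigenfunction subspace of $q_e\oplus q_e$ (with the trivial case $\mu_j=0$ treated separately), whereas the paper keeps $\varphi_1^{m\eps,\pm}$, estimates the $t$-integrals as $\pm i\tfrac2\pi+\mathcal{O}(\eps)$, and invokes the min-max over all $j$-dimensional subspaces of $\dom\tilde q_e$.
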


\begin{proof} Let $f = (f^+,f^-) \in H^1(\R, \C^2)$ and set $u = f^+\varphi_1^{m\varepsilon,+} + f^- \varphi_1^{m\varepsilon,-}$. By construction $u \in \dom{\cE_\Gamma(\varepsilon)}$ and for $\eps$ small enough there holds
\[
	a_+[u] = (1+c\varepsilon) \int_\Str\Big( |\partial_s u - i\frac{\kappa}2\sigma_3 u|^2\Big)dsdt - (1+c\varepsilon)\int_\R \frac{\kappa^2}4 |f|^2 ds + \frac{E_1(m\varepsilon)}\varepsilon^2 \|f\|^2_{L^2(\R)} + c\varepsilon \|f\|^2_{L^2(\R)}.
\]
Now, one remarks that
\[
	\int_\Str\Big( |\partial_s u - i\frac{\kappa}2\sigma_3 u|^2\Big)ds dt = \int_\R |f'|^2ds + \int_\R \frac{\kappa^2}4 |f|^2 ds + \int_\R \kappa \Re\Big(\int_{-1}^1 \langle\partial_s u, - i\sigma_3 u\rangle\,dt\Big)ds
\]
and there holds
\begin{multline*}
	\langle\partial_s u, - i\sigma_3 u\rangle = (f^+)'\overline{f^+}\langle\varphi_1^{m\varepsilon,+},-i\sigma_3 \varphi_1^{m\varepsilon,+}\rangle + (f^+)'\overline{f^-}\langle\varphi_1^{m\varepsilon,+},-i\sigma_3 \varphi_1^{m\varepsilon,-}\rangle \\+ (f^-)'\overline{f^+}\langle\varphi_1^{m\varepsilon,-},-i\sigma_3 \varphi_1^{m\varepsilon,+}\rangle +(f^-)'\overline{f^-}\langle\varphi_1^{m\varepsilon,-},-i\sigma_3 \varphi_1^{m\varepsilon,-}\rangle.
\end{multline*}
Now by Point \eqref{itm:6-1D} in Proposition \ref{prop:1D}, there holds
\[
	\int_{-1}^1\langle\varphi_1^{m\varepsilon,+},-i\sigma_3 \varphi_1^{m\varepsilon,+}\rangle dt = i\int_{-1}^1\langle\varphi_1^{+},\sigma_3 \varphi_1^{+}\rangle dt + \mathcal{O}(\varepsilon) = i\frac2\pi + \mathcal{O}(\varepsilon),
\]
where we have used the explicit expression of $\varphi_1^+$ given in \eqref{eqn:defphipm}.
Similarly, one gets
\[
\int_{-1}^1\langle\varphi_1^{m\varepsilon,-},-i\sigma_3 \varphi_1^{m\varepsilon,-}\rangle dt = -i\frac2\pi + \mathcal{O}(\varepsilon)
\]
as well as
\[
	\int_{-1}^1\langle\varphi_1^{m\varepsilon,+},-i\sigma_3 \varphi_1^{m\varepsilon,-}\rangle \,dt = \mathcal{O}(\varepsilon),\quad \int_{-1}^1\langle\varphi_1^{m\varepsilon,-},-i\sigma_3 \varphi_1^{m\varepsilon,+}\rangle\,dt = \mathcal{O}(\varepsilon).
\]
Hence, we get
\[
	\int_{-1}^1\langle\partial_s u, - i\sigma_3 u\rangle \,dt = i\frac2\pi\big((f^+)'\overline{f^+} - (f^{-})'\overline{f^-}\big) + \langle f',f + \sigma_1 f\rangle\mathcal{O}(\varepsilon) = \langle\partial_s f, -i\frac{2}\pi \sigma_3 f\rangle + \langle f',f + \sigma_1 f\rangle\mathcal{O}(\varepsilon).
\]
Thus, there exists $\varepsilon_1 > 0$ small enough and $k > 0$ such that for all $\eps \in (0,\eps_1)$ there holds
\[
	a_+[u] \leq (1+c\varepsilon) \Big(\int_\R |f'|^2 ds + \int_\R 2\Re(\langle f',-i\frac{\kappa}\pi \sigma_3 f\rangle) ds\Big) +\frac{E_1(m\varepsilon)^2}{\varepsilon^2} \|f\|^2_{L^2(\R)} + (1+c\varepsilon)k\varepsilon(\|f\|^2_{L^2(\R)} + \|f'\|^2_{L^2(\R)}).
\]
Now, remark that
\[
	\int_\R |f'|^2 ds + \int_\R 2\Re(\langle f',-i\frac{\kappa}\pi \sigma_3 f\rangle) ds = \int_\R \Big(|f' - i \frac{\kappa}{\pi}\sigma_3 f|^2 - \frac{\kappa^2}{\pi^2}|f|^2 \Big)ds.
\]
Thus, for a $c_1 >0$ there holds
\begin{multline*}
	a_+[u] \leq (1+c_1\varepsilon) \int_\R \Big(|f' - i \frac{\kappa}{\pi}\sigma_3 f|^2 - \frac{\kappa^2}{\pi^2}|f|^2 \Big)ds + \frac{E_1(m\varepsilon)^2}{\varepsilon^2} \|f\|^2_{L^2(\R)} + c_1 \varepsilon \|f\|^2_{L^2(\R)} \\= (1+c_1\varepsilon) \tilde{q}_e[f] +\frac{E_1(m\varepsilon)^2}{\varepsilon^2} \|f\|^2_{L^2(\R)} + c_1 \varepsilon \|f\|^2_{L^2(\R)}.
\end{multline*}
Now, the min-max principle of Proposition \ref{prop:min-max}, Proposition \ref{prop:encfq} and Proposition \ref{prop:ueq1D} give for all $j \in \N$:
\[
	\mu_j(\cD_\Gamma(\eps)^2) \leq (1+c_1\varepsilon) \mu_j(q_e\oplus q_e) + \frac{E_1(m\varepsilon)^2}{\varepsilon^2} + c_1\varepsilon 
\]
so that \eqref{eq:upbound} follows.
\end{proof}
\subsection{A lower bound}\label{pr:lb}
The aim of this paragraph is to prove the following lower bound.

\begin{prop}
	Let $j \in \N$. There exists $\varepsilon_1 > 0$ and $c > 0$ such that for all $\varepsilon \in (0,\varepsilon_1)$ there holds
	\begin{equation}\label{eq:lowbound}
		  \mu_j(\cD_\Gamma(\varepsilon)^2)\geq \frac{E_1(m\varepsilon)^2}{\eps^2} +\mu_j(q_e\oplus q_e) - c\varepsilon \,.
	\end{equation}
	\label{prop:lb}
\end{prop}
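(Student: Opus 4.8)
The plan is to mirror the strategy of Proposition \ref{prop:ub}, but now producing a lower bound on $\mu_j(\cD_\Gamma(\eps)^2)$ via the lower quadratic form $a_-$ from Proposition \ref{prop:encfq}. The min-max principle tells us $\mu_j(\cD_\Gamma(\eps)^2) \ge \mu_j(a_-)$, so it suffices to bound $\mu_j(a_-)$ from below. The essential point is that $a_-$ decouples into a transverse part $\frac1{\eps^2}\|\cT(m\eps)u\|^2$ and a longitudinal part $(1-c\eps)\int_\Str(|\partial_s u - i\frac\kappa2\sigma_3 u|^2 - \frac{\kappa^2}4|u|^2)$, plus the harmless $-c\eps\|u\|^2$. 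First I would use Proposition \ref{prop:1D}\eqref{itm:2-1D}--\eqref{itm:4-1D}: the transverse operator $\cT(m\eps)$ has a spectral gap, with lowest modulus eigenvalue $E_1(m\eps)$, and its square $\cT(m\eps)^2$ has first eigenvalue $E_1(m\eps)^2$ with two-dimensional eigenspace spanned by $\varphi_1^{m\eps,\pm}$, the next eigenvalue being $E_2(m\eps)^2$ which stays bounded away from $E_1(m\eps)^2$ uniformly for small $\eps$ (indeed $E_2(m\eps)^2 \to (3\pi/4)^2$).

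The core of the argument is a decomposition argument. For $u \in \dom{\cE_\Gamma(\eps)}$, write $u(s,\cdot) = g^+(s)\varphi_1^{m\eps,+} + g^-(s)\varphi_1^{m\eps,-} + u^\perp(s,\cdot)$, where $u^\perp(s,\cdot)$ is orthogonal in $L^2((-1,1),\C^2)$ to the eigenspace of $\cT(m\eps)^2$ associated with $E_1(m\eps)^2$. Then $\frac1{\eps^2}\|\cT(m\eps)u\|^2_{L^2((-1,1))} \ge \frac{E_1(m\eps)^2}{\eps^2}(|g^+|^2+|g^-|^2) + \frac{E_2(m\eps)^2}{\eps^2}\|u^\perp\|^2_{L^2((-1,1))}$. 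Since $E_2(m\eps)^2 - E_1(m\eps)^2 \ge \delta_0 > 0$ for $\eps$ small, the contribution of $u^\perp$ carries a large prefactor $\eps^{-2}\delta_0$. One then needs to show that the longitudinal form, applied to $u$, differs from its value on the pure part $g^+\varphi_1^{m\eps,+}+g^-\varphi_1^{m\eps,-}$ by something controlled by $C(\|u^\perp\|^2 + \|\partial_s u^\perp\|^2)$ plus $\mathcal O(\eps)$ errors coming from $\varphi_1^{m\eps,\pm} = \varphi_1^\pm + \mathcal O(\eps)$ (Proposition \ref{prop:1D}\eqref{itm:6-1D}) and from the off-diagonal terms $\langle \varphi_1^{m\eps,+}, -i\sigma_3\varphi_1^{m\eps,-}\rangle = \mathcal O(\eps)$ computed exactly as in the proof of Proposition \ref{prop:ub}. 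The gain $\eps^{-2}\delta_0\|u^\perp\|^2$ absorbs any term that is merely $C\|u^\perp\|^2$, and a Cauchy--Schwarz/Young splitting of cross terms of the form $\kappa\,\Re\langle \partial_s(g^\pm\varphi_1^{m\eps,\pm}), -i\sigma_3 u^\perp\rangle$ against $\eps^{-2}\delta_0\|u^\perp\|^2$ (after controlling $\|\partial_s u^\perp\|$ — see below) lets us discard $u^\perp$ altogether at the cost of $\mathcal O(\eps)\|u\|^2_{H^1}$.

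After reduction to the pure part, the longitudinal form becomes, up to $(1\pm c\eps)$ factors and $\mathcal O(\eps)$ remainders, exactly $\tilde q_e[g]$ with $g=(g^+,g^-)$ — this is the same computation as in Proposition \ref{prop:ub}, run in reverse. By Proposition \ref{prop:ueq1D}, $\tilde q_e$ is unitarily equivalent to $q_e\oplus q_e$, and the map $u \mapsto g$ is, up to $\mathcal O(\eps)$, an isometry on the relevant subspace, so $\mu_j(a_-) \ge \frac{E_1(m\eps)^2}{\eps^2} + (1-c\eps)\mu_j(q_e\oplus q_e) - c\eps$; since $\mu_j(q_e\oplus q_e)$ is a fixed constant this is $\frac{E_1(m\eps)^2}{\eps^2} + \mu_j(q_e\oplus q_e) - c'\eps$, which is \eqref{eq:lowbound}. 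The main obstacle I anticipate is the handling of $\|\partial_s u^\perp\|$: the form $a_-$ only controls $\|\partial_s u\|^2$ with the full curvature potential, so one has to show that the kinetic energy of $u^\perp$ is bounded by the kinetic energy of $u$ plus lower-order terms — i.e. that projecting onto the transverse ground modes does not substantially increase the $s$-derivative energy. This follows because the projection coefficients $\langle u(s,\cdot),\varphi_1^{m\eps,\pm}\rangle$ are differentiable in $s$ with $\partial_s$ falling on $u$ only (the $\varphi_1^{m\eps,\pm}$ are $s$-independent), so $\partial_s u^\perp = \partial_s u - (\partial_s g^+)\varphi_1^{m\eps,+} - (\partial_s g^-)\varphi_1^{m\eps,-}$ and orthogonality gives $\|\partial_s u^\perp\|^2 + \|\partial_s g\|^2_{L^2(\R)} = \|\partial_s u\|^2$ (the $-i\frac\kappa2\sigma_3$ twist produces only $\mathcal O(1)$ cross terms, again absorbed by the gap). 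This bookkeeping, together with keeping every error uniformly $\mathcal O(\eps)$ relative to $\|u\|^2_{\cH}$ so that min-max over $j$-dimensional subspaces goes through cleanly, is the technical heart of the proof.
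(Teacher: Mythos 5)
Your overall architecture is the same as the paper's: bound $\mu_j(\cD_\Gamma(\eps)^2)$ below by $\mu_j(a_-)$, project onto the transverse ground eigenspace spanned by $\varphi_1^{m\eps,\pm}$, use the gap $E_2(m\eps)^2-E_1(m\eps)^2\gtrsim 1$ to penalize the orthogonal part, reduce the longitudinal form on the projected part to $\tilde q_e$ and invoke Proposition \ref{prop:ueq1D}, then conclude by min-max for the direct sum. However, there is a genuine gap at exactly the point you flag as the technical heart: the cross term in which $\partial_s$ falls on $u^\perp$, namely $2\Re\int_\Str\kappa\,\langle -\tfrac{i}{2}\sigma_3\,\Pi^{m\eps}u,\ \partial_s u^\perp\rangle\,ds\,dt$ (the pure $\langle\partial_s\Pi^{m\eps}u,\partial_s u^\perp\rangle$ term vanishes since $\Pi^{m\eps}$ is $s$-independent, and the term with $\partial_s$ on $\Pi^{m\eps}u$ is the benign one you treat). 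Your proposed remedy — bound $\|\partial_s u^\perp\|\le\|\partial_s u\|$ by Pythagoras and split by Young ``against $\eps^{-2}\delta_0\|u^\perp\|^2$'' — cannot work, because the gap term controls $\|u^\perp\|^2$, not $\|\partial_s u^\perp\|^2$. Concretely, any splitting $C\|\Pi^{m\eps}u\|\,\|\partial_s u^\perp\|\le \tfrac{\theta}{2}\|\partial_s u^\perp\|^2+\tfrac{C^2}{2\theta}\|\Pi^{m\eps}u\|^2$ forces a choice: to get an $\mathcal O(\eps)$ error on the $\Pi^{m\eps}u$ sector you need $\theta\sim\eps^{-1}$, and the resulting $\eps^{-1}\|\partial_s u^\perp\|^2$ has nothing to absorb it (the only positive term in $\partial_s u^\perp$ available in $a_-$ has an $O(1)$ coefficient); with $\theta\sim 1$ you can absorb the derivative term but are left with an $\mathcal O(1)\|\Pi^{m\eps}u\|^2$ error, which only yields $\mu_j(\cD_\Gamma(\eps)^2)\ge \frac{E_1(m\eps)^2}{\eps^2}+\mu_j(q_e\oplus q_e)-C$, too weak for \eqref{eq:lowbound} and hence for the $\mathcal O(\eps^2)$ expansion in Theorem \ref{thm:main}.

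The missing idea (and the paper's resolution, in the estimate of the term called $J_2$) is an integration by parts in $s$: since the offending factor is $\kappa(s)$ times an $s$-independent operator, one writes $\int_\R\kappa\,\langle\sigma_3\Pi^{m\eps}u,\partial_s u^\perp\rangle\,ds=-\int_\R\kappa\,\langle\sigma_3\Pi^{m\eps}\partial_s u,u^\perp\rangle\,ds-\int_\R\kappa'\,\langle\sigma_3\Pi^{m\eps}u,u^\perp\rangle\,ds$, which moves the derivative onto the projected part and produces a $\kappa'$ term (this is where hypothesis (B), $\kappa'\in L^\infty$, enters). After this, every cross term is of the form $\|\partial_s\Pi^{m\eps}u\|\,\|u^\perp\|$ or $\|\Pi^{m\eps}u\|\,\|u^\perp\|$, and the weighted Young inequality $2ab\le\eps a^2+\eps^{-1}b^2$ gives relative $\mathcal O(\eps)$ errors on the projected sector plus $\eps^{-1}\|u^\perp\|^2$ terms that the spectral gap $\sim\eps^{-2}\|u^\perp\|^2$ absorbs. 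With this step inserted, the rest of your argument (reduction to $\tilde q_e[f]-c\eps\|f\|^2$ with $f^\pm=\langle u,\varphi_1^{m\eps,\pm}\rangle$, unitary equivalence with $q_e\oplus q_e$, and the min-max comparison with the $\eps^{-2}$ branch for $\eps$ small depending on $j$) goes through as in the paper.
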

To prove Proposition \ref{prop:lb}, we need to introduce the projector in $L^2(\Str,\C^2)$ defined for all $\delta > 0$ and $u \in L^2(\Str,\C^2)$ by
\[
	\Pi^{\delta} u := \langle u,\varphi_1^{\delta,+}\rangle_{L^2(-1,1)}\varphi_1^{\delta,+} + \langle u,\varphi_1^{\delta,-}\rangle_{L^2(-1,1)}\varphi_1^{\delta,-}
\]
and set $\Pi := \Pi^0$. Thanks to Point \ref{itm:6-1D} in Proposition \ref{prop:1D}, there holds
\[
	\Pi^\delta = \Pi + \mathcal{O}(\delta), \qquad\mbox{ as $\delta \to 0$,}
\]
where the remainder is estimated in the operator norm. We also set $(\Pi^{\delta})^\perp := Id - \Pi^\delta$ and $\Pi^\perp = Id - \Pi$.
\begin{proof}
Let $u \in \dom{\cE_\Gamma(\varepsilon)}$ and remark that there holds
\begin{multline}
	a_-[u] - \frac{E_1(m\varepsilon)^2}{\eps^2} \|u\|^2\geq (1-c\varepsilon)\int_\Str\Big(|(\partial_s - i \frac{\kappa}2 \sigma_3)(\Pi^{m\eps} + (\Pi^{m\eps})^\perp)u|^2 \\- \frac{\kappa^2}4 (|\Pi^{m\eps} u|^2 + |(\Pi^{m\eps})^\perp u|^2)\Big)ds dt  \\+ \frac{E_2(m\varepsilon)^2-E_1(m\varepsilon)^2}{\eps^2}\|(\Pi^{m\eps})^\perp u\|^2 \\- c\varepsilon (\|\Pi^{m\varepsilon} u\|^2 + \|(\Pi^{m\varepsilon})^\perp u\|^2).
	\label{eqn:1lb}
\end{multline}
We focus on the first term on the right-hand side of the last equation which gives
\begin{multline*}
\int_\Str \big(|\big(\partial_s - i \frac{\kappa}2 \sigma_3\big)\big(\Pi^{m\varepsilon} + (\Pi^{m\varepsilon})^\perp\big)u|^2\big)ds dt = \int_\Str |\big(\partial_s - i \frac{\kappa}2 \sigma_3\big)\Pi^{m\varepsilon}u|^2 ds dt \\+ \int_\Str |\big(\partial_s - i \frac{\kappa}2 \sigma_3\big)(\Pi^{m\varepsilon})^\perp u|^2 ds dt \\+  2\Re\Big(\int_\Str \langle\big(\partial_s  - i \frac{\kappa}2 \sigma_3\big)\Pi^{m\eps} u, \big(\partial_s  - i \frac{\kappa}2 \sigma_3\big)(\Pi^{m\eps})^\perp u\rangle ds dt\Big)\\\geq \int_\Str |\big(\partial_s  - i \frac{\kappa}2 \sigma_3\big)\Pi^{m\varepsilon}u|^2 ds dt\\+ 2\Re\Big(\int_\Str \langle\big(\partial_s  - i \frac{\kappa}2 \sigma_3\big)\Pi^{m\eps} u, \big(\partial_s  - i \frac{\kappa}2 \sigma_3\big)(\Pi^{m\eps})^\perp u\rangle ds dt\Big).
\end{multline*}
Let us deal with the last term. Remark that there holds
\begin{multline*}
	\langle\big(\partial_s  - i \frac{\kappa}2 \sigma_3\big)\Pi^{m\eps} u, \big(\partial_s  - i \frac{\kappa}2 \sigma_3\big)(\Pi^{m\eps})^\perp u\rangle\\= \langle\big([\big(\partial_s  - i \frac{\kappa}2 \sigma_3\big),\Pi^{m\eps}] + \Pi^{m\eps}\big(\partial_s  - i \frac{\kappa}2 \sigma_3\big)\big)\Pi^{m\eps}u,\big([\big(\partial_s  - i \frac{\kappa}2 \sigma_3\big),(\Pi^{m\eps})^\perp] + (\Pi^{m\eps})^\perp\big(\partial_s  - i \frac{\kappa}2 \sigma_3\big)\big)(\Pi^{m\eps})^\perp u\rangle,
\end{multline*}
where the scalar product is taken in $L^2(\Str,\C^2)$. Taking into account that $\Pi^{m\eps}$ commutes with $\partial_s$, we obtain
\[
	[\big(\partial_s  - i \frac{\kappa}2 \sigma_3\big),\Pi^{m\eps}] = - [\big(\partial_s  - i \frac{\kappa}2 \sigma_3\big),(\Pi^{m\eps})^\perp] = - i \frac{\kappa}{2}[\sigma_3,\Pi^{m\eps}],
\]
which gives
\begin{multline*}
\langle\big(\partial_s  - i \frac{\kappa}2 \sigma_3\big)\Pi^{m\eps} u, \big(\partial_s  - i \frac{\kappa}2 \sigma_3\big)(\Pi^{m\eps})^\perp u\rangle\\= \langle\big(- i \frac{\kappa}{2}[\sigma_3,\Pi^{m\eps}] + \Pi^{m\eps}\big(\partial_s  - i \frac{\kappa}2 \sigma_3\big)\big)\Pi^{m\eps}u,\big( i \frac{\kappa}{2}[\sigma_3,\Pi^{m\eps}] + (\Pi^{m\eps})^\perp\big(\partial_s  - i \frac{\kappa}2 \sigma_3\big)\big)(\Pi^{m\eps})^\perp u\rangle\\
= \langle - i \frac{\kappa}{2}[\sigma_3,\Pi^{m\eps}] \Pi^{m\eps}u, i \frac{\kappa}{2}[\sigma_3,\Pi^{m\eps}](\Pi^{m\eps})^\perp u\rangle + \langle - i \frac{\kappa}{2}[\sigma_3,\Pi^{m\eps}] \Pi^{m\eps}u, (\Pi^{m\eps})^\perp\big(\partial_s  - i \frac{\kappa}2 \sigma_3\big)(\Pi^{m\eps})^\perp u\rangle\\ + \langle\Pi^{m\eps}\big(\partial_s  - i \frac{\kappa}2 \sigma_3\big)\Pi^{m\eps}u, i\frac\kappa2[\sigma_3,\Pi^{m\eps}](\Pi^{m\eps})^\perp u\rangle\\ := J_1 + J_2 + J_3.
\end{multline*}
One notices that there exists $c_1 > 0$ such that
\begin{equation}
	|J_1| \leq c_1 \|\Pi^{m\eps}u\| \|(\Pi^{m\eps})^\perp u\| \leq \frac{c_1}2\eps \|\Pi^{m\eps}u\|^2 + \frac{c_1}{2\eps}\|(\Pi^{m\eps})^\perp u\|^2.
	\label{eqn:J1}
\end{equation}
Similarly, there exists $c_2 > 0$ such that
\begin{equation}
	|J_3| \leq c_2 \|(\partial_s - i\frac{\kappa}{2})\Pi^{m\eps} u\| \|(\Pi^{m\eps})^\perp u\|\leq \frac{c_2}{2}\eps\|(\partial_s - i\frac{\kappa}{2})\Pi^{m\eps} u\|^2 + \frac{c_2}{2\eps}\|(\Pi^{m\eps})^\perp u\|^2.
	\label{eqn:J3}
\end{equation}

Concerning the term $J_2$, there holds
\[
	J_2 = \langle - i\frac\kappa2 (\Pi^{m\eps})^\perp \sigma_3 \Pi^{m\eps} u, \partial_s (\Pi^{m\eps})^\perp u\rangle + \langle - i \frac\kappa2 (\Pi^{m\eps})^\perp \sigma_3 \Pi^{m\eps} u, -i \frac\kappa2\sigma_3 (\Pi^{m\eps})^\perp u\rangle
\]
and an integration by parts in the $s$-variable gives
\begin{multline*}
	J_2 = -\langle - i \frac\kappa2 (\Pi^{m\eps})^\perp \sigma_3 \Pi^{m\eps} \partial_s u, (\Pi^{m\eps})^\perp u\rangle - \langle - i \frac{\kappa'}2(\Pi^{m\eps})^\perp\sigma_3\Pi^{m\eps}u,(\Pi^{m\eps})^\perp u\rangle\\+ \langle - i \frac\kappa2 (\Pi^{m\eps})^\perp \sigma_3 \Pi^{m\eps} u, -i \frac\kappa2\sigma_3 (\Pi^{m\eps})^\perp u\rangle.
\end{multline*}
Hence, there exists $c_3 > 0$ such that
\begin{multline*}
	|J_2| \leq c_3 \big( \|\partial_s \Pi^{m\eps}u\| \|(\Pi^{m\eps})^\perp u\| + \|\Pi^{m\eps}u\| \|(\Pi^{m\eps})^\perp u\|\big) \\\leq c_3 \big(\frac\eps2 (\|\partial_s \Pi^{m\eps} u\|^2 + \|\Pi^{m\eps}u\|^2) + \frac1{2\eps}\|(\Pi^{m\eps})^\perp u\|^2\big).
\end{multline*}
Noting that 
\[
	\|\partial_s \Pi^{m\eps}u \| \leq \|\big(\partial_s -i\frac\kappa2 \sigma_3\big) u\| + \|\frac\kappa2 \Pi^{m\eps}u\|
\]
there exists $c_4 > 0$ such that
\begin{equation}\label{eq:J_2}
	|J_2| \leq c_4 \big(\frac\eps2 (\|\big(\partial_s -i\frac\kappa2 \sigma_3\big)\Pi^{m\eps}  u\|^2 + \|\Pi^{m\eps}u\|^2) + \frac1{2\eps}\|(\Pi^{m\eps})^\perp u\|^2\big).
\end{equation}
In the estimates \eqref{eqn:J1}, \eqref{eqn:J3} and \eqref{eq:J_2} we have used that $\kappa \in L^\infty(\R)$,  and the following bounds on the operator norms
\[
\|\Pi^{m\eps}\| \leq 1\,,\qquad \|(\Pi^{m\eps})^\perp\|\leq 1\]
as $\Pi^{m\eps}$ and $(\Pi^{m\eps})^\perp$ are projectors, as well as the elementary identity  $ 
2ab \leq \eps a^2 + \frac1{\eps}b^2$, with $a,b,\eps > 0$.

Combining the above observations, coming back to \eqref{eqn:1lb}, we get, for some $c_5 \geq 0$ :
\begin{multline*}
	a_-[u]- \frac{E_1(m\eps)^2}{\eps^2} \geq (1-c_5\eps) \int_\Str \Big(|(\partial_s - i \frac\kappa2\sigma_3) \Pi^{m\eps}u|^2 - \frac{\kappa^2}4|\Pi^{m\eps} u|^2\Big)dtds\\ + \Big(\frac{E_2(m\eps)^2 - E_1(m\eps)^2}{\eps^2} - \frac{c_5}\eps - c_5\Big) \|(\Pi^{m\eps})^\perp u\|^2 - c_5 \|\Pi^{m\eps} u\|^2\,. 
\end{multline*}
Notice that the first term on the right-hand-side of \eqref{eq:J_2} has been absorbed in the integral above, taking a new constant $c_5$, and the terms involving $\Vert \Pi^{m\varepsilon}u\Vert^2$ and $\Vert (\Pi^{m\varepsilon})^\perp u\Vert^2$ contribute to the last two terms in the above formula.

Now, set $f^\pm := \langle u, \varphi_1^{m\eps,\pm}\rangle_{L^2(-1,1)}$ and remark that the computation of the term
\[
	\int_\Str \Big(|(\partial_s - i \frac\kappa2\sigma_3) \Pi^{m\eps}u|^2 - \frac{\kappa^2}4|\Pi^{m\eps} u|^2 ds dt.
\]
is similar to the one performed in the proof of the upper bound (see the proof of Proposition \ref{prop:ub}) and it yields
\[
	\int_\Str \Big(|(\partial_s - i \frac\kappa2\sigma_3) \Pi^{m\eps}u|^2 - \frac{\kappa^2}4|\Pi^{m\eps} u|^2 ds dt \geq (q_e\oplus q_e)[f] - c_6 \varepsilon \|f\|^2_{L^2(\R)},
\]
for some constant $c_6$. All in all, we have obtained that there exists $k > 0$ such that provided $\eps$ is small enough there holds
\begin{multline*}
	a_-[u] - \frac{E_1(m\eps)^2}{\eps^2} \geq (1-k\eps)(q_e\oplus q_e)[f] - k \eps \|f\|^2_{L^2(\R)}\\+ \Big(\frac{E_2(m\eps)^2 - E_1(m\eps)^2}{\eps^2} - \frac{k}\eps - k\Big) \|(\Pi^{m\eps})^\perp u\|^2  \\\geq (1-k\eps)(q_e\oplus q_e)[f] - k \eps \|f\|^2_{L^2(\R)}+ \Big(\frac{5\pi^2}{16\eps^2} - \frac{k}\eps - k\Big) \|(\Pi^{m\eps})^\perp u\|^2,
\end{multline*}
where for the last inequality we have used Point \eqref{itm:4-1D} Proposition \ref{prop:1D}. As the quadratic form on the right-hand side is the quadratic form of the direct sum of two operators, if one fixes $j \in \N$ the min-max principle of Proposition \ref{prop:min-max} yields
\[
	\mu_j(\cD_\Gamma(\eps)^2) -\frac{E_1(m\eps)^2}{\eps^2}\geq j-\text{th element of the set} \Big( \{(1-k\eps)\mu_j(q_e\oplus q_e) - k\eps\}\cup \{\frac{5\pi^2}{16\eps^2} - \frac{k}\eps - k\}\Big).
\]
Hence, for $\eps$ small enough (depending on $j$), this reads
\[
	\mu_j(\cD_\Gamma(\eps)^2) -\frac{E_1(m\eps)^2}{\eps^2}\geq (1-k\eps)\mu_j(q_e\oplus q_e) - k\eps.
\]
which is precisely Proposition \ref{prop:lb}.
\end{proof}
\subsection{Proof of Theorem \ref{thm:main}}\label{pr:pr}
Let $J \geq 1$ and remark that due to the symmetry of the spectrum of $\cD_\Gamma(\eps)$ with respect to zero, for all $j \in \{1,\dots,J\}$ there holds
\begin{equation}
	\lambda_j(\cD_\Gamma(\eps)) = \sqrt{\mu_{2j}(\cD_\Gamma(\eps)^2)}.
	\label{eqn:relcarre}
\end{equation}
Combining Propositions \ref{prop:ub} and \ref{prop:lb}, we have for all $j \in \N$ that when $\eps \to 0$
\begin{multline*}
	\mu_j(\cD_\Gamma(\eps)^2) = \frac{E_1(m\eps)^2}{\eps^2} + \mu_j(q_e\oplus q_e) + \mathcal{O}(\eps) \\= \frac{E_1(m\eps)^2}{\eps^2}\Big(1 + \frac{\eps^2}{E_1(m\eps)^2}\mu_{j}(q_e\oplus q_e) + \mathcal{O}(\varepsilon^3)\Big),
\end{multline*}
where we have used that $E_1(m\eps) = \mathcal{O}(1)$ when $\eps \to 0$. Hence, there holds
\[
	\sqrt{\mu_j(\cD_\Gamma(\eps)^2)} = \frac{E_1(m\eps)}\eps + \frac{1}{2 E_1(m\eps)}\mu_j(q_e\oplus q_e) \eps + \mathcal{O}(\eps^2)
\]
and by Point \ref{itm:5-1D} in Proposition \ref{prop:1D}, there holds
\[
	\sqrt{\mu_j(\cD_\Gamma(\eps)^2)} = \frac{E_1(m\eps)}\eps + \frac2\pi \mu_j(q_e\oplus q_e) \eps + \mathcal{O}(\eps^2).
\]
Thus, for $j\in \{1,\dots,J\}$, \eqref{eqn:relcarre} yields
\[	
	\lambda_j(\cD_\Gamma(\eps)) = \frac{E_1(m\eps)}\eps + \frac2\pi \mu_{2j}(q_e\oplus q_e) \eps + \mathcal{O}(\eps^2) = \frac{E_1(m\eps)}\eps + \frac2\pi \mu_{j}(q_e) \eps + \mathcal{O}(\eps^2),
\]
concluding the proof.
\providecommand{\bysame}{\leavevmode\hbox to3em{\hrulefill}\thinspace}
\providecommand{\MR}{\relax\ifhmode\unskip\space\fi MR }
\providecommand{\MRhref}[2]{%
  \href{http://www.ams.org/mathscinet-getitem?mr=#1}{#2}
}
\providecommand{\href}[2]{#2}


\begin{thebibliography}{10}

\bibitem{AB}
A. R. Akhmerov and C. W. J. Beenakker,
\emph{Boundary conditions for Dirac fermions on a terminated honeycomb lattice}
Phys. Rev. B \textbf{77} (2008).

\bibitem{Arrizabalaga-LeTreust-Raymond17}
N.~Arrizabalaga, L.~Le~Treust, and N.~Raymond, \emph{On the {MIT} bag model in
  the non-relativistic limit}, Comm. Math. Phys. \textbf{354} (2017), no.~2,
  641--669. \MR{3663620}

\bibitem{Barbaroux-Cornean-LeTreust-Stockmeyer_2019}
J.-M. Barbaroux, H.~D. Cornean, L.~Le~Treust, and E.~Stockmeyer,
  \emph{Resolvent convergence to {D}irac operators on planar domains}, Ann.
  Henri Poincar{\'e} \textbf{20} (2019), 1877--1891.

\bibitem{Benguria-Fournais-Stockmeyer-Bosch_2017b}
R.~D. Benguria, S.~Fournais, E.~Stockmeyer, and H.~Van Den~Bosch,
  \emph{Self-adjointness of two-dimensional {D}irac operators on domains}, Ann.
  Henri Poincar{\'e} \textbf{18} (2017), 1371--1383.
  
  \bibitem{BBKOB22}
  W. Borrelli, P. Briet, D. Krej\v{c}i\v{r}\'{\i}k, T. Ourmieres-Bonafos,
  \emph{Spectral properties of relativistic quantum waveguides}, Ann. Henri Pincar\'e (2022). https://doi.org/10.1007/s00023-022-01179-9

\bibitem{CJKTW}
A. Chodos, R. L. Jaffe, K. Johnson, C. B. Thorn, and V. F. Weisskopf. \emph{New extended model
of hadrons}, Phys. Rev. D (3), 9(12):3471–3495, (1974).

\bibitem{DE95}
P.~Duclos and P.~Exner, \emph{{C}urvature-induced bound states in quantum
  waveguides in two and three dimensions}, Rev. Math. Phys. \textbf{7} (1995),
  73--102.

\bibitem{Exner-Holzmann}
P.~Exner and M.~Holzmann, \emph{{D}irac operator spectrum in tubes and layers with a zigzag type boundary}, arXiv e-prints: 2112.08109, (2021).


\bibitem{Exner-Kovarik}
P.~Exner and H.~Kova\v{r}{\'i}k, \emph{Quantum waveguides}, Springer, (2015).

\bibitem{ES}
P.~Exner and P.~{\v S}eba, \emph{Bound states in curved quantum waveguides},
  J.~Math.~Phys. \textbf{30} (1989), 2574--2580.
  
\bibitem{G}  
  N. Ginoux, \emph{The Dirac spectrum},
  Lecture notes in Mathematics, Springer-Verlag Berlin Heidelberg (2009).
\bibitem{GJ}
J.~Goldstone and R.~L. Jaffe, \emph{Bound states in twisting tubes}, Phys.
  Rev.~B \textbf{45} (1992), 14100--14107.
  
  \bibitem{Kat}
T.~Kato, \emph{Perturbation theory for linear operators}, Springer-Verlag,
  Berlin, 1995, Reprint of the Corr. Print. of the 2nd ed. 1980.
  
  \bibitem{KKriz}
D~Krej\v{c}i\v{r}\'{\i}k and J.~K\v{r}\'{\i}\v{z}, \emph{On the spectrum of
  curved quantum waveguides}, Publ.~RIMS, Kyoto University \textbf{41} (2005),
  no.~3, 757--791.
  
  \bibitem{LeTreust-Ourmieres-Bonafos_2018}
L.~Le~Treust and T.~Ourmi\`eres-Bonafos, \emph{Self-adjointness of {D}irac
  operators with infinite mass boundary conditions in sectors}, Ann. H.
  Poincar{\'e} \textbf{19} (2018), 1465--1487.
  
  \bibitem{HMZ}
  O. Hijazi, S. Montiel, X. Zhang, 
  \emph{Eigenvalues of the Dirac Operator on Manifolds with Boundary},
  Comm. Math. Phys., \textbf{221}, pages 255–265 (2001).

\end{thebibliography}
\end{document}